\newtheorem{thm}{Theorem}[section]
\newtheorem{prop}[thm]{Proposition}
\newtheorem{lem}[thm]{Lemma}
\newtheorem{cor}[thm]{Corollary}
\theoremstyle{remark}
\newtheorem{remark}[thm]{Remark}
\newtheorem{ex}[thm]{Example}
\newcommand{\K}{{\mathbb K}}
\newcommand{\aut}{\operatorname{aut}}
\newcommand{\ev}{{\rm ev}}
\newcommand{\ad}{{\rm ad}}
\newcommand{\pr}{{\rm pr}}
\newcommand{\comp}{{\rm comp}}
\newcommand{\End}{\operatorname{End}}
\newcommand{\Hom}{{\rm Hom}}
\newcommand{\Lp}{{\rm Lp}}
\newcommand{\Diag}{{\rm Diag}}
\newcommand{\Hur}{\text{Hur}}
\numberwithin{equation}{section}
\title{Cartan calculus in string topology}
\author{
Takahito Naito
}
\date{}
\address{
Nippon Institute of Technology,
4-1 Gakuendai, Miyashiro-machi, Minamisaitama-gun, Saitama, 345-8501 Japan
}
\email{naito.takahito@nit.ac.jp}
\keywords{Cartan calculus, String topology, Free loop space, Rational homotopy theory}
\subjclass[2010]{Primary 55P50; Secondary 55P62}
\begin{document}

\maketitle

\begin{abstract}
In this manuscript, we investigate a Cartan calculus on the homology of free loop spaces which is introduced by Kuribayashi, Wakatsuki, Yamaguchi and the author.
In particular, it is proved that the Cartan calculus can be described by the loop product and bracket in string topology.
Moreover, by using the descriptions, we show that the loop product behaves well with respect to the Hodge decomposition of the homology of free loop spaces.
\end{abstract}

\section{Introduction and Results}\label{sect:introduction}

Throughout of this manuscript, we assume that $M$ is a closed oriented smooth manifold of dimension $m$ and the coefficient of singular (co)homology is a field $\K$ with ${\rm char} \, \K = 0$.
Let $LM={\rm Map}(S^1 ,M)$ be the free loop space of $M$ and $\aut_1 (M)$ the connected component of the mapping space ${\rm Map}(M,M)$ containing the identity map of $M$. Here, we always identify $S^1$ with ${\mathbb R}/{\mathbb Z}$.

The classical Cartan calculus for differential geometry consists of three types of derivations on $\Omega^*(M)$ the de Rham complex of $M$: the Lie derivative $L_X$, the contraction (interior product) $i_X$ with a vector field $X$ on $M$ and the exterior derivative $d$.
The Lie derivative and the contraction induce actions of the space of vector fields on the de Rham complex. Moreover, these derivations satisfy Cartan (magic) formula $L_X = [d, i_X]$ for any vector field $X$, where $[ \ , \ ]$ denotes the commutator bracket.

This structure is formulated by Fiorenza and Kowalzig in \cite{FK2020} as a homotopy Cartan calculus. 
In \cite{KNWY22}, Kuribayashi, Wakatsuki, Yamaguchi and the author investigated homotopy Cartan calculi relating to the free loop spaces.
We gave a structure of homotopy Cartan calculi on the Hochschild chain complex of $\Omega^*(M)$.
Moreover, as a geometric description of the structure, we constructed operators $L$, $e$ from $\pi_* (\aut_1 (M)) \otimes \K$ to $\End (H^*(LM))$.
In this manuscript, we focus on a homologically defined version of the description
\begin{equation}\label{Le_homology}
L , e : \pi_* (\aut_1 (M)) \otimes \K \longrightarrow \End (H_*(LM));
\end{equation}
see Section \ref{sect:Cartan} for more details.

On the other hands, the homology of $LM$ has rich algebraic structures in string topology initiated by Chas and Sullivan.
In \cite{CS}, they defined a Batalin-Vilkovisky algebra structure on the shifted homology ${\mathbb H}_*(LM) := H_{*+m}(LM)$ with respect to a multiplication $\bullet$ called the loop product and the Batalin-Vilkovisky (BV) operator $\Delta$ which is given by the rotation of loops.
In particular, ${\mathbb H}_*(LM)$ is a Gerstenhaber algebra with the loop bracket $\{ \ , \ \}$; see Section \ref{sect:LoopProd} for details about the algebraic structures.

The aim of this manuscript is to investigate a relation between the loop product (bracket) and the operations \eqref{Le_homology}.
In particular, we show that the operations \eqref{Le_homology} can be described by using the loop product and bracket.
In order to describe the result, we recall the morphism 
$
\Gamma_1 : \pi_* (\Omega \aut_1 (M)) \otimes \K  \to {\mathbb H}_*(LM)
$
due to F\'elix and Thomas \cite{FT2004}; see also Section \ref{sect:Gamma1} for the definition.
They proved that $\Gamma_1$ is injective when $M$ is simply-connected.
By using the morphism $\Gamma_1$, we prove the following theorem, which is a main result in this manuscript.
Here, the notations $L_f$ and $e_f$ means the values of $L$ and $e$ at $f\in \pi_* (\aut_1(M))$, respectively.

\begin{thm}\label{mthm}
Let $h\in \pi_n (\Omega \aut_1(M))$ for $n\geq 1$ and $a \in {\mathbb H}_*(LM)$. Then, the loop product $\Gamma_1 (h) \bullet a$ and the loop bracket $\{ \Gamma_1 (h), a \}$ satisfy the identities
\begin{enumerate}
\item $\Gamma_1 (h) \bullet a = (-1)^{n} e_{\partial (h)} (a)$ \hspace{1em} \text{and}
\item $\{ \Gamma_1 (h), a \} = L_{\partial (h)} (a) - (-1)^{n} \Delta \Gamma_1 (h) \bullet a$
\end{enumerate}
in ${\mathbb H}_*(LM)$. Here, $\partial : \pi_n(\Omega \aut_1 (M)) \stackrel{\cong}{\longrightarrow} \pi_{n+1} (\aut_1 (M))$ is the adjoint map.
Moreover, if $M$ is simply-connected, then the following identity holds;
\begin{enumerate}
\item[(3)] $\{ \Gamma_1 (h), a \} = L_{\partial (h)} (a)$.
\end{enumerate}
\end{thm}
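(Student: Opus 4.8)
The plan is to establish parts (1) and (2) first, and then derive (3) as a consequence of (2) together with the observation that $\Delta \Gamma_1(h) = 0$ when $M$ is simply-connected. For part (1), I would work on the chain level using the Hochschild chain model $\HochschildModel$ of $\Omega^*(M)$ from \cite{KNWY22}. The operator $e_f$ is by construction induced by contraction along a representative of $f \in \pi_*(\aut_1(M))$, so the idea is to identify the geometric cycle representing $\Gamma_1(h)$ with the image of a fundamental class under the evaluation/composition map $\aut_1(M) \times M \to M$ adjoint to $h$. Concretely, F\'elix--Thomas define $\Gamma_1$ via the composite that sends a sphere in $\Omega\aut_1(M)$ to a family of loops in $LM$; comparing this with the Hochschild-level formula for $e_{\partial(h)}$ should reduce part (1) to a compatibility statement between the shriek map defining the loop product and the contraction operator, with the sign $(-1)^n$ coming from the degree shift under the adjunction $\partial$. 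I expect the bookkeeping of orientations and degree shifts ($m$ for the loop product, the Koszul signs in the Hochschild differential) to be the most delicate point here.

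For part (2), the strategy is to invoke the BV/Gerstenhaber relation in string topology,
\[
\{ x, a \} = (-1)^{\deg{x}} \bigl( \Delta(x \bullet a) - \Delta x \bullet a - (-1)^{\deg{x}} x \bullet \Delta a \bigr),
\]
applied with $x = \Gamma_1(h)$. Using part (1) to replace $x \bullet a$ and $x \bullet \Delta a$ by $e_{\partial(h)}$ applied to $a$ and to $\Delta a$, the right-hand side becomes an expression in $\Delta$, $e_{\partial(h)}$ and $\Delta\Gamma_1(h) \bullet a$. The key input is then the Cartan magic formula on the homology level, $L_f = [\Delta, e_f]$ (the homological shadow of $L_X = [d, i_X]$, established in \cite{KNWY22}), which lets me recombine $\Delta \circ e_{\partial(h)} - \pm\, e_{\partial(h)} \circ \Delta$ into $L_{\partial(h)}$. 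Matching signs throughout — in particular tracking that $\deg{\Gamma_1(h)}$ differs from $n$ by the relevant shift — should yield exactly the claimed identity $\{\Gamma_1(h), a\} = L_{\partial(h)}(a) - (-1)^n \Delta\Gamma_1(h)\bullet a$. The main obstacle in this step is ensuring that the homological Cartan formula from \cite{KNWY22} is stated in precisely the normalization needed; if there is a discrepancy, I would need to re-derive it from the homotopy Cartan calculus on $\HochschildModel$.

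Finally, for part (3), when $M$ is simply-connected the class $\Gamma_1(h)$ lies in the image of the based loop space homology under the map $H_*(\Omega M) \to H_*(LM)$ (more precisely, it comes from $\pi_*(\Omega\aut_1(M))$ and factors through the contractible-rotation part), and the BV operator $\Delta$, which is induced by the $S^1$-action rotating loops, vanishes on such classes because they are represented by families of loops that are constant in the $S^1$-direction up to homotopy. Hence $\Delta\Gamma_1(h) = 0$, and substituting into (2) gives $\{\Gamma_1(h), a\} = L_{\partial(h)}(a)$. I would make the vanishing $\Delta\Gamma_1(h) = 0$ precise either by a direct chain-level argument in the Hochschild model or by citing the behavior of $\Gamma_1$ with respect to the $S^1$-action in \cite{FT2004}; this is where I would need to be careful that simple-connectivity is genuinely used, since it is the hypothesis under which $\Gamma_1$ is well-understood and injective.
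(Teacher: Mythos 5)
Your part (2) is essentially the paper's argument (BV relation plus the Cartan formula $L_f=\Delta e_f-(-1)^{n-1}e_f\Delta$, which the paper establishes geometrically as Lemma \ref{lem:CartanFormula}), but parts (1) and (3) have genuine gaps. For (1), your plan to work in the Hochschild chain model of $\Omega^*(M)$ has two problems. First, that model computes $H_*(LM)$ and carries the loop product only when $M$ is simply-connected, whereas identities (1) and (2) of the theorem are asserted for an arbitrary closed oriented $M$; a Hochschild-level argument cannot reach the non-simply-connected case. Second, and more importantly, you never supply the actual content of the comparison: the whole point is to show that the composite $\comp\circ g'\circ(h\times 1)$ (concatenating a loop $\gamma$ with the loop $t\mapsto h(u)(t)(\gamma(0))$, which is what the shriek map plus formula \eqref{Lp_unit} produce) is homotopic to $L(\ad_f)\circ(\ad_1\times 1)$ (the ``diagonal'' map $t\mapsto h(u)(t)(\gamma(t))$ defining $e_{\partial(h)}$). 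These are visibly different maps $S^{n-1}\times LM\to LM$, and the paper must construct explicit homotopies $H_1,H_2,H_3$ to connect them. Saying the comparison ``should reduce to a compatibility statement'' skips the one step that actually requires work.

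For (3), your justification of $\Delta\Gamma_1(h)=0$ is wrong. The class $\Gamma_1(h)$ is not in the image of $H_*(\Omega M)\to H_*(LM)$: it is carried by $g(\gamma,x)(t)=\gamma(t)(x)$, a family of genuinely non-constant free loops. Even if it were, $\Delta$ does not vanish on classes coming from based loops --- the paper's own Lemma \ref{lem:HomologyClass}(2) gives the counterexample $\Delta(\eta_n)=c_*([S^n])\neq 0$ with $\eta_n$ in the image of $H_*(\Omega S^n)$. The correct mechanism (Proposition \ref{prop:comp_Gamma_BV}) is via the Hodge decomposition: ${\rm Im}\,\Gamma_1\subset{\mathbb H}_*^{(1)}(LM)$, the operator $\Delta$ lowers Hodge weight by one, and for simply-connected $M$ one has ${\mathbb H}^{(0)}_{n+1}(LM)\cong H_{n+1+m}(M)=0$ for $n\geq 0$. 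This is where simple connectivity actually enters, not through injectivity of $\Gamma_1$.
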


A proof of Theorem \ref{mthm} is stated in Section \ref{sect:Proof_mthm}.
The identities in Theorem \ref{mthm} give us some applications with respect to the loop product.
The following corollary follows immediately from Theorem \ref{mthm}(3).

\begin{cor}\label{cor:L_der}
Let $f\in \pi_* (\aut_1 (M))$. If $M$ is simply-connected, then the operator $L_f : {\mathbb H}_* (LM) \to {\mathbb H}_* (LM)$ is a derivation with respect to the loop product.
\end{cor}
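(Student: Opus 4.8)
The plan is to deduce Corollary \ref{cor:L_der} directly from Theorem \ref{mthm}(3) together with the known properties of the string topology operations. First I would observe that the operator $L_f$ is defined on $f \in \pi_*(\aut_1(M)) \otimes \K$, and since $\partial : \pi_n(\Omega \aut_1(M)) \otimes \K \to \pi_{n+1}(\aut_1(M)) \otimes \K$ is an isomorphism, every such $f$ can be written as $f = \partial(h)$ for some $h \in \pi_n(\Omega \aut_1(M)) \otimes \K$ (extending scalars on $\Gamma_1$ and $\partial$ in the evident way). Hence by Theorem \ref{mthm}(3), for $M$ simply-connected we have $L_f(a) = \{\Gamma_1(h), a\}$ for all $a \in {\mathbb H}_*(LM)$; that is, $L_f$ is realized as the operator $\{\Gamma_1(h), -\}$ given by taking the loop bracket with the fixed class $\Gamma_1(h)$.

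Next I would invoke the Gerstenhaber algebra structure on ${\mathbb H}_*(LM)$ recalled in Section \ref{sect:LoopProd}. By definition, the loop bracket $\{\ , \ \}$ is a (graded) biderivation with respect to the loop product $\bullet$; in particular, for any fixed element $x \in {\mathbb H}_*(LM)$ the map $\{x, -\} : {\mathbb H}_*(LM) \to {\mathbb H}_*(LM)$ is a derivation with respect to $\bullet$, up to the appropriate Koszul sign determined by $\deg{x}$. Applying this with $x = \Gamma_1(h)$, the operator $a \mapsto \{\Gamma_1(h), a\}$ is a derivation of the loop product. Combining this with the identification from the previous paragraph shows that $L_f$ is a derivation with respect to $\bullet$, which is exactly the assertion of the corollary.

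The only point requiring care — and the mild obstacle here — is bookkeeping of degrees and signs: one must check that the shift conventions ${\mathbb H}_*(LM) = H_{*+m}(LM)$, the degree of $\Gamma_1(h)$ for $h \in \pi_n(\Omega \aut_1(M))$, and the sign in the Gerstenhaber biderivation rule are mutually consistent, so that "derivation" is meant in the correctly signed graded sense and the claim is not vacuous or sign-inconsistent. Since $\Gamma_1(h)$ has a fixed (homogeneous) degree, the Leibniz rule $\{\Gamma_1(h), a \bullet b\} = \{\Gamma_1(h), a\} \bullet b \pm a \bullet \{\Gamma_1(h), b\}$ holds verbatim, and the corollary follows with no further work. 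No separate argument is needed beyond quoting Theorem \ref{mthm}(3) and the Gerstenhaber axioms.
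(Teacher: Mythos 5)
Your proposal is correct and follows exactly the paper's argument: write $f = \partial(h)$, use Theorem \ref{mthm}(3) to identify $L_f$ with $\{\Gamma_1(h),\,-\,\}$, and then apply the Poisson identity \eqref{Poisson} for the loop bracket. The sign bookkeeping you flag is precisely what the paper's displayed computation carries out, so no further work is needed.
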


We also discuss a behavior of the loop product in the {\it Hodge decomposition} of $H_*(LM)$.
When $M$ is simply-connected, the homology of $LM$ admits a direct sum decomposition $H_*(LM)\cong \bigoplus_i H_*^{(i)}(LM)$ and each summand $H_*^{(i)}(LM)$ is given as a eigenspace; see \cite{Vi91}.
F\'elix and Thomas \cite{FT2008} proved that the loop product $\bullet$ behaves well with respect to the Hodge decomposition in the following sense;
\[
\bullet : {\mathbb H}_*^{(i)}(LM) \otimes {\mathbb H}_*^{(j)}(LM) \longrightarrow {\mathbb H}_*^{(\leq i + j)}(LM).
\]
An equivariant version of the result is given from Berest, Ramadoss and Zhang \cite{BRZ2021} when the manifold $M$ is rationally elliptic.
Note that the eigenspace $H_*^{(i)}(LM)$ can be defined even if $M$ is not simply-connected. 
We show the following behavior of the loop product of non-simply connected manifolds in the Hodge decomposition.

\begin{thm}\label{thm:Lp_HD}
Let $h\in \pi_n (\Omega \aut_1(M))$ for $n\geq 1$ and $a \in {\mathbb H}^{(i)}_*(LM)$. 
Then, the loop product $\Gamma_1 (h) \bullet a$ is contained in ${\mathbb H}^{(i+1)}_*(LM)$, that is, the loop product $\bullet$ induces
\[
\bullet : {\rm Im} \, \Gamma_1 \otimes {\mathbb H}_*^{(i)}(LM) \longrightarrow {\mathbb H}_*^{(i+1)}(LM).
\]
\end{thm}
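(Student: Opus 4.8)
The strategy is to deduce the statement from part~(1) of Theorem~\ref{mthm} together with a computation of the Hodge weight of the contraction operator $e$. Since $\Gamma_1(h)\bullet a=(-1)^{n}e_{\partial(h)}(a)$, the assertion is equivalent to the claim that, for every $f\in\pi_{n+1}(\aut_1(M))$ with $n\geq 1$, the operator $e_f$ raises the eigenspace degree by exactly one:
\[
e_f\bigl({\mathbb H}^{(i)}_*(LM)\bigr)\subseteq {\mathbb H}^{(i+1)}_*(LM).
\]
Thus the heart of the matter is to determine the behaviour of $e$ relative to the power operations.

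A first, clean ingredient is that the image of $\Gamma_1$ itself lies in weight one. Let $\phi_k\colon LM\to LM$ be the map induced by the degree-$k$ self-map $p_k$ of $S^1$, so that ${\mathbb H}^{(i)}_*(LM)$ is the subspace on which $(\phi_k)_*$ acts as multiplication by $k^{i}$. From the construction of $\Gamma_1$ recalled in Section~\ref{sect:Gamma1}, one checks that precomposition with $\phi_k$ corresponds on the source to the $k$-th power map $\omega\mapsto\omega\circ p_k$ of $\Omega\aut_1(M)$; since the $k$-th power map of a loop space induces multiplication by $k$ on homotopy groups, hence on $\pi_*(\Omega\aut_1(M))\otimes\K$, and $\Gamma_1$ is $\K$-linear and homotopy invariant, we get $(\phi_k)_*\Gamma_1(h)=k\,\Gamma_1(h)$, i.e. $\Gamma_1(h)\in{\mathbb H}^{(1)}_*(LM)$. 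One checks similarly that $L_f$ preserves the eigenspace decomposition, since it is induced by the postcomposition action of $\aut_1(M)$ on $LM$, which commutes with $\phi_k$. Combined with the F\'elix--Thomas estimate $\bullet\colon{\mathbb H}^{(1)}_*(LM)\otimes{\mathbb H}^{(i)}_*(LM)\to{\mathbb H}^{(\leq i+1)}_*(LM)$ of \cite{FT2008}, this already yields the statement up to the vanishing of the components of $\Gamma_1(h)\bullet a$ of weight $\leq i$.

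To kill those components I would invoke the model description of $e$ from Section~\ref{sect:Cartan}: the operator $e_f$ is induced from an operator on the Hochschild (Sullivan) model of $LM$ which, being a contraction along a derivation, is homogeneous with respect to the $\lambda$-(Hodge) decomposition of the Hochschild complex, shifting its degree by one. Since this decomposition matches the eigenspace decomposition of $H_*(LM)$, it follows that $e_f$ carries ${\mathbb H}^{(i)}_*(LM)$ into ${\mathbb H}^{(i+1)}_*(LM)$, and Theorem~\ref{mthm}(1) then completes the proof. As a consistency check, combining Theorem~\ref{mthm}(2) with the BV identity gives $\Delta\bigl(\Gamma_1(h)\bullet a\bigr)=(-1)^{n}\bigl(L_{\partial(h)}(a)+\Gamma_1(h)\bullet\Delta a\bigr)$, whose right-hand side lies in ${\mathbb H}^{(i)}_*(LM)$; with an induction on $i$ this forces the weight-$\leq i$ components of $\Gamma_1(h)\bullet a$ to be $\Delta$-closed, which narrows things considerably.

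The step I expect to be the main obstacle is precisely the exact determination of the weight of $e_f$, and two points require care. First, the loop product is genuinely \emph{not} compatible with $(\phi_k)_*$ — this is exactly the source of the F\'elix--Thomas inequality — so the vanishing of the lower-weight corrections cannot be read off from a naive geometric manipulation and must come from the algebraic model. Second, $M$ is allowed to be non-simply connected, where the Sullivan/Hochschild model no longer computes all of $H_*(LM)$; here one uses that, since $h$ is a based sphere in $\Omega\aut_1(M)$ with $n\geq 1$, the class $\Gamma_1(h)$ is carried by the component of contractible loops, on which $\phi_k$ acts and the eigenspace decomposition is defined, so that one may restrict to that component and compare the algebraic and topological decompositions there.
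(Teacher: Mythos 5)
Your opening reduction is exactly the right one and matches the paper: by Theorem \ref{mthm}(1) the statement is equivalent to $e_f\bigl({\mathbb H}^{(i)}_*(LM)\bigr)\subseteq{\mathbb H}^{(i+1)}_*(LM)$ for $f=\partial(h)$. But the way you then establish this weight shift has a genuine gap. You route the argument through (a) the F\'elix--Thomas estimate $\bullet\colon{\mathbb H}^{(1)}\otimes{\mathbb H}^{(i)}\to{\mathbb H}^{(\leq i+1)}$ and (b) a ``model description of $e_f$'' as a weight-one operator on the Hochschild/Sullivan complex. Both of these live in the simply-connected world: the F\'elix--Thomas inequality in \cite{FT2008} and the identification of the algebraic $\lambda$-decomposition with the eigenspaces $H^{(i)}_*(LM)$ (Proposition \ref{prop:Model_HD}) are proved only for simply-connected $M$, whereas Theorem \ref{thm:Lp_HD} is stated precisely to cover the non-simply-connected case. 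Your proposed repair --- restricting to the component of contractible loops because $\Gamma_1(h)$ lives there --- does not close this, since the class $a\in{\mathbb H}^{(i)}_*(LM)$ may be supported on any component of $LM$ and the product $\Gamma_1(h)\bullet a$ lands in the component of $a$; moreover $\varphi_k$ permutes components, so one cannot simply ``compare the algebraic and topological decompositions'' componentwise. Note also that in this paper $e_f$ is \emph{defined} geometrically, by $e_f(a)=L(\ad_f)_*(\eta_n\times a)$, not via a contraction on the Hochschild complex, so the model is not the natural place to read off its weight.

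The correct argument is elementary and entirely geometric, and is Proposition \ref{prop:Le_HD} of the paper: the class $\eta_n$ satisfies $\eta_n\in H^{(1)}_{n-1}(LS^n)$ (Lemma \ref{lem:HomologyClass}(1), proved by checking that $\varphi_k\circ\ad_1\simeq\ad_1\circ\tilde p_k$ via the adjunction $[S^{n-1},\Omega S^n]\cong[S^n,S^n]$); the cross product is additive in the Hodge weight by naturality; and $L(\ad_f)$ satisfies $L(\ad_f)\circ(\varphi_k\times\varphi_k)=\varphi_k\circ L(\ad_f)$ because $\ad_f$ acts on loops pointwise in the loop parameter. Hence $e_f(a)=L(\ad_f)_*(\eta_n\times a)$ has weight $i+1$ whenever $a$ has weight $i$, with no hypothesis on $\pi_1(M)$ and no appeal to Sullivan models or to the F\'elix--Thomas inequality. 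Your concluding ``consistency check'' via the BV identity and induction on $i$ narrows the possibilities but, as you note yourself, does not force the lower-weight components to vanish, so it cannot substitute for this step.
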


This manuscript is organized as follows. 
In Section 2, we recall a homotopy theoretic construction of the loop product and the loop bracket.
In Section 3, geometric and algebraic definitions of the Hodge decomposition of $H_*(LM)$ are described.
The definition of the operators \eqref{Le_homology} is introduced in Section 4.
The morphism $\Gamma_1$ due to F\'elix and Thomas is stated in Section 5.
Moreover, some properties about $\Gamma_1$ with respect to the Hodge decomposition are also observed.
Section 6 is devoted to proving Theorem \ref{mthm}, Corollary \ref{cor:L_der} and Theorem \ref{thm:Lp_HD}.
In Section 7, we give some examples of the image of $\Gamma_1$ when $M$ is a sphere.


\section{Loop product and loop bracket}
\label{sect:LoopProd}
In this section, we first introduce a construction of shriek maps in general setting for recalling a homotopy theoretic description of the loop product due to Cohen and Jones \cite{CJ2002}. 
Consider the pullback diagram of connected spaces
  \[
  \xymatrix{
  E_1 \ar[d]_-{p} & E_2 \ar[d]^-{q} \ar[l]_-{j}\\
  N_1 & N_2. \ar[l]_-{i}
  }
  \]
Here $N_i$ is a compact oriented smooth manifold of dimension $n_i$, $p$ is a fibration and $i$ is an embedding.
Observe that $j$ is an embedding as topological spaces.
Consider the associated disk bundle $\pi : D(\nu) \to N_2$ of the normal bundle of $i$ and an embedding $D(\nu) \hookrightarrow N_1$.
We identify $D(\nu)$ with the embedding image in $N_1$ and
simply write $\tilde{D}(\nu) :=p^{-1}(D(\nu))$ and $\partial \tilde{D}(\nu) := p^{-1}(\partial D(\nu))$.
Since $p$ is a fibration, the homotopy lifting property shows that there exists a map $\tilde{\pi}: \tilde{D}(\nu) \to E_2$ such that $q\circ \tilde{\pi}=\pi \circ p|_{\tilde{D}(\nu)}$.

Let $u \in H^{n_1 -n_2}(D(\nu), \partial D(\nu ))$ be the Thom class and denote by $\tilde{u}$ the pullback of the cohomology class $p^*(u)$ in $H^{n_1 - n_2}( \tilde{D}(\nu), \partial \tilde{D}(\nu ))$.
Then the {\it shriek map} of $j$, denoted by $j_!$, is defined as the composite
\[
\xymatrix@C40pt{
j_! : H_*(E_2)
\ar[r]^-{\text{proj}}
&
H_*(E_2, E_2 \setminus j(E_1))
&
H_*(\tilde{D}(\nu), \partial \tilde{D}(\nu))
\ar[l]^-{\cong}_-{\text{excision}}
\ar[ld]_(.6){\cap \tilde{u}}
\\
&
H_* (\tilde{D}(\nu))
\ar[r]^-{\tilde{\pi}_*}
&
H_* (E_1),
}
\]
where $ \cap \tilde{u}$ denotes the cap product with $\tilde{u}$.

Let $LM\times _{M}LM$ denote the subspace of the product $LM\times LM$ consisting of pairs of loops having the same basepoint, that is, there exists the pullback diagram
\[
\xymatrix{
LM \times LM \ar[d]_-{\ev_0 \times \ev_0}  
&
LM \times_M LM 
\ar[d]^-{\ev_0}
\ar[l]_-{j}
\\
M \times M 
& 
M \ar[l]_-{\Diag}
}
\]
in which $j$ is the inclusion, $\ev_0$ is the evaluation map at $0$ and $\Diag$ is the diagonal map.
Let $\comp : LM \times_M LM \to LM$ be the concatenation of loops defined by
\[
\comp (\gamma_1 , \gamma_2)(t)
 = \left\{
\begin{array}{ll}
\gamma_1 (2t) & \left( 0  \leq t \leq \frac{1}{2}  \right) \\
\gamma_2 (2t-1) & \left( \frac{1}{2} \leq t \leq 1 \right)
\end{array}
\right.
\]
for $(\gamma_1 , \gamma_2) \in LM \times_M LM$.
Then the {\it loop product}, denoted by $\Lp$, is defined as
\[
\xymatrix{
\Lp : H_*(LM)^{\otimes 2}
\ar[r]^-{\times}
&
H_*(LM \times LM)
\ar[r]^-{j_!}
&
H_*(LM\times_M LM)
\ar[r]^-{\comp_*}
&
H_*(LM),
}
\]
where $\times$ denotes the cross product.
The loop product $\Lp$ induces a multiplication on the shifted homology ${\mathbb H}_* (LM) := H_{*+m}(LM)$ defined by
\[
a \bullet b 
:= (-1)^{m \parallel a \parallel }\Lp ( a \otimes b)
=(-1)^{m ( |a|+m) }\Lp (a \otimes b)
\]
for $a$, $b \in {\mathbb H}_*(LM)$, where $\parallel  \hspace{-0.2em} a  \hspace{-0.2em} \parallel$ stands for the degree of $a$ in ${\mathbb H}_*(LM)$.
The definition of $\bullet$ implies that the grading shift morphism $s^m : H_*(LM) \to {\mathbb H}_* (LM)$, $s^m (a) = a$ of degree $m$ fits into the commutative diagram
\[
\xymatrix{
H_* (LM) \otimes H_*(LM) \ar[r]^-{\Lp} \ar[d]_-{s^m \otimes s^m} & H_*(LM) \ar[d]^-{s^m}
\\
{\mathbb H}_*(LM) \otimes {\mathbb H}_* (LM) \ar[r]^-{\bullet} & {\mathbb H}_*(LM).
}
\]
It is well-known that the multiplication $\bullet$ is an associative, unital and commutative multiplication, and moreover, the homology class $c_*([M])\in {\mathbb H}_0(LM)$ is the unit with respect to $\bullet$, where $[M]$ is the fundamental class of $M$ and $c:M\to LM$ is a map which assigns an element $x \in M$ a constant loop at $x$.
Especially, we have
\begin{equation}\label{Lp_unit}
\Lp \left( c_*([M]) \otimes a \right) = (-1)^m a
\end{equation}
in the non-shifted homology $H_*(LM)$.

Next, we recall the Batalin-Vilkovisky (BV) operator $\Delta$ on the homology $H_*(LM)$.
Let $ r : S^1 \times LM \to LM$ be a $S^1$-action of $LM$ induced by the rotation of loops.
Explicitly, $r$ is defined by $r (s, \gamma)(t)=\gamma (t+s)$ for $s,t\in S^1$ and $\gamma \in LM$.
Then, we define $\Delta$ as the composite
\[
\xymatrix{
\Delta : H_*(LM)
\ar[r]^-{[S^1]\times }
&
H_*(S^1 \times LM)
\ar[r]^-{r_*}
&
H_*(LM),
}
\]
where $[S^1]$ is the fundamental class of $S^1$.

Chas and Sullivan \cite{CS} showed that the loop product $\bullet$ and the BV operator $\Delta$ turn ${\mathbb H}_*(LM)$ into a BV-algebra; see also \cite{Ta07}, \cite{Vo05}.
In general, from the result due to Getzler \cite{Ge94}, any BV-algebras have a structure of Gerstenhaber algebras.
Precisely, the bracket $\{ \ , \ \}$ on ${\mathbb H}_*(LM)$ defined by
\[
\{ a , b \} 
:= (-1)^{\parallel  a  \parallel} \Delta (a \bullet b) - (-1)^{\parallel  a  \parallel} \Delta (a) \bullet b - a \bullet \Delta (b)
\]
is a Lie bracket which satisfies the Poisson identity
\begin{equation}\label{Poisson}
\{  a, b_1 \bullet b_2 \} = \{ a, b_1 \} \bullet b_2 + (-1)^{\parallel b_1 \parallel (\parallel a \parallel +1)} b_1 \bullet \{ a , b_2 \}.
\end{equation}
This bracket is called the {\it loop bracket}.

\section{Hodge decomposition of the homology of free loop space}\label{sect:HD}

In this section, we recall geometric and algebraic definitions for the Hodge decomposition of $H_*(LM)$ and compare them.
Let $k \geq 2$ be an integer and $p_k : S^1 \to S^1$ the $k$-fold covering given by $p_k (t) = kt$ for $t\in S^1$.
We denote by $\varphi_k : LM \to LM$ the map induced by $p_k$,
and by $H_*^{(i)}(LM)=\{ a \in H_*(LM) \mid \varphi_{k*}(a) = k^i a \}$ the eigenspace of $\varphi_{k*}$ the induced map in homology corresponding to the eigenvalue $k^i$ for an integer $i\geq 0$.
Remark that the definition of $H_*^{(i)}(LM)$ does not depend on the choice of $k$ since ${\rm char} \, \K =0$.

\begin{lem}\label{lem:HD_BV}
The image of $H_*^{(i)}(LM)$ under $\Delta$ is contained in $H_*^{(i-1)}(LM)$, namely,
$
\Delta \left( H_*^{(i)}(LM)  \right) \subset H_*^{(i-1)}(LM).
$
\end{lem}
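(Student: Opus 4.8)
The plan is to exploit the compatibility between the $k$-th power map $\varphi_k$ and the rotation $S^1$-action $r$ that defines $\Delta$. First I would record the basic geometric identity relating the two. With $\varphi_k(\gamma)(t)=\gamma(kt)$ and $r(s,\gamma)(t)=\gamma(t+s)$, a direct unwinding of definitions gives $\varphi_k\bigl(r(ks,\gamma)\bigr)(t)=\gamma(kt+ks)=r\bigl(s,\varphi_k(\gamma)\bigr)(t)$, so that the square
\[
\varphi_k \circ r \circ (p_k \times \mathrm{id}_{LM}) = r \circ (\mathrm{id}_{S^1}\times \varphi_k) : S^1\times LM \longrightarrow LM
\]
commutes. (Note the power map appears on the \emph{source} circle of $r$, i.e.\ it is multiplication by $k$, not by $1/k$; this is the point that ultimately lowers the eigenvalue by one.)

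Next I would pass to singular homology and evaluate both sides on the class $[S^1]\times a$ for a fixed $a\in H_*^{(i)}(LM)$. On the right-hand side, $(\mathrm{id}\times\varphi_k)_*([S^1]\times a)=[S^1]\times \varphi_{k*}(a)=k^i\,([S^1]\times a)$ by definition of the eigenspace, and applying $r_*$ yields $k^i\,\Delta(a)$. On the left-hand side, since $p_k\colon S^1\to S^1$ has degree $k$ we have $(p_k\times\mathrm{id})_*([S^1]\times a)=k\,([S^1]\times a)$; applying $r_*$ and then $\varphi_{k*}$ gives $k\,\varphi_{k*}(\Delta(a))$. Comparing the two expressions, $k\,\varphi_{k*}(\Delta(a))=k^i\,\Delta(a)$, hence $\varphi_{k*}(\Delta(a))=k^{i-1}\,\Delta(a)$, which is exactly the assertion $\Delta(a)\in H_*^{(i-1)}(LM)$. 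As remarked in the text, independence of $k$ is automatic in characteristic zero, so it suffices to argue for one value of $k\ge 2$.

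I do not expect a genuine obstacle here: every step is either a definition-chase or a standard functoriality statement, and the only thing requiring care is bookkeeping with the cross product and the degree of $p_k$ so that the scalar factor $k$ lands on the correct side. If one prefers to avoid the K\"unneth bookkeeping, the same conclusion follows from describing $\Delta$ as integration of $r^*$ over the fundamental class of $S^1$ and noting that $\varphi_k$ intertwines the two rotation actions by the rescaling above; but the cross-product computation is the most transparent route.
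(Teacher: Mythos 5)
Your argument is correct and is essentially identical to the paper's proof: the same commutative square $\varphi_k \circ r \circ (p_k \times 1) = r \circ (1 \times \varphi_k)$, followed by the same evaluation on $[S^1]\times a$ using $p_{k*}[S^1]=k[S^1]$ to get $k\,\varphi_{k*}(\Delta a)=k^i\,\Delta a$. No issues.
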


\begin{proof}
Let $r : S^1 \times LM \to LM$ be the $S^1$-action stated in Section \ref{sect:LoopProd}. By definition, it is easy to check that the following diagram is commutative:
\begin{equation}\label{diag:rotation_power}
\xymatrix{
S^1 \times LM 
\ar[r]^-{p_k \times 1}
\ar[d]_-{1\times \varphi_k}
&
S^1 \times LM
\ar[r]^-{r}
&
LM
\ar[d]^-{\varphi_k}
\\
S^1 \times LM
\ar[rr]^-{r}
&&
LM.
}
\end{equation}

Observe that $p_{k*} [S^1] = k[S^1]$ in $H_1(S^1)$.
For any $a \in H_*^{(i)}(LM)$, the definition of $\Delta$ and a commutativity of the diagram \eqref{diag:rotation_power} yield that
\begin{align*}
k \cdot \varphi_{k*} \left( \Delta a \right)
&= \varphi_{k*} \circ r_* \left( p_{k*}[S^1]\times a \right) \\
&= r_{*} \circ (1\times \varphi_k)_* \left( [S^1]\times a \right) \\
&= r_* ([S^1]\times k^i a)\\
&= k^i \cdot \Delta (a),
\end{align*}
which completes the proof.
\end{proof}

\begin{remark}
In \cite[Theorem 2]{FT2008}, F\'elix and Thomas proved the same assertion of Lemma \ref{lem:HD_BV} by algebraic way when $M$ is simply-connected.
\end{remark}

Next we recall an algebraic definition for the Hodge decomposition which is described by using a Sullivan model for $LM$ due to Vigu\'e-Poirrier and Sullivan \cite{VS76} in rational homotopy theory.
We refer the reader to the book \cite{FHT} for details about notations and terminology from rational homotopy theory.

From now on, we assume that $M$ is simply-connected in this section.
Let $\wedge V = (\wedge V ,d)$ be a minimal Sullivan model for $M$ and ${\mathcal L} = (\wedge V \otimes \wedge \overline{V}, D)$ the Sullivan model for $LM$ described in \cite[\S 15(c)]{FHT}.
Here, $\overline{V}^i = V^{i+1}$ is the suspension of $V$. We denote by $\bar{v} \in \overline{V}$ the element which corresponds to $v\in V$.
Let $s$ be a derivation of degree $-1$ on $\wedge V \otimes \wedge \overline{V}$ defined by $s(v)= \bar{v}$ and $s(\bar{v})=0$. The differential $D$ of ${\mathcal L}$ is the unique extension of $d$ which satisfies the condition $Ds+sd=0$.

From the definition of $D$, we have a direct sum decomposition ${\mathcal L} = \bigoplus_i {\mathcal L}_{(i)}$ of complexes, where ${\mathcal L}_{(i)} = (\wedge V \otimes \wedge^i \overline{V}, D)$.
Applying the homology functor to the decomposition yields
\begin{equation}\label{HD}
H^* ({\mathcal L}) \cong \bigoplus_{i\geq 0} H^* ({\mathcal L}_{(i)}).
\end{equation}

We here consider $H^*_{(i)}(LM) = \{ \alpha  \in H^*(LM) \mid \varphi_k^* (\alpha) = k^i \alpha \}$ the cohomological version of $H_*^{(i)}(LM)$.
The following proposition asserts that $H^* ({\mathcal L}_{(i)})$ is an algebraic construction for $H^*_{(i)}(LM)$.
It is a well known result, however, we provide a proof for the sake of completeness.

\begin{prop}\label{prop:Model_HD}
The homology $H^* ({\mathcal L}_{(i)})$ is isomorphic to $H^*_{(i)}(LM)$.
\end{prop}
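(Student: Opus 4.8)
The plan is to identify the map $\varphi_k \colon LM \to LM$ at the level of the Sullivan model $\HochschildModel$ and then match eigenspaces. First I would recall the Sullivan model for the $S^1$-action and the $k$-fold covering map: the map $p_k \colon S^1 \to S^1$ induces, under the standard model, the degree-$k$ self-map of the generator of $H^1(S^1)$, and correspondingly the induced self-map $\Phi_k$ of $\HochschildModel = (\wedge V \otimes \wedge \overline V, D)$ should be chain homotopic to the algebra map determined by $v \mapsto v$ on $\wedge V$ and $\bar v \mapsto k \bar v$ on $\overline V$. The cleanest way to see this is to use the model $(\wedge(t,dt) \otimes \wedge V \otimes \wedge \overline V)$ of the evaluation $S^1 \times LM \to LM$ (equivalently the map $\HochschildModel \to \wedge(t,dt)\otimes \HochschildModel$ encoding the action $r$), under which $v \mapsto v + \bar v\, dt$; precomposing with $p_k$ sends $dt \mapsto k\,dt$, hence $v \mapsto v + k\bar v\, dt$, and extracting the induced map on $LM$ (set $t=0$, keep the $dt$-coefficient appropriately) gives exactly $v\mapsto v$, $\bar v \mapsto k\bar v$ up to the usual homotopy. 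This is the step I expect to be the main obstacle: pinning down $\varphi_k^*$ on $\HochschildModel$ honestly, i.e. producing an explicit chain map in the homotopy class of $\varphi_k^*$ and not merely asserting its form.

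Once that is in hand, the rest is bookkeeping. The algebra map $\Phi_k$ with $\Phi_k(v) = v$, $\Phi_k(\bar v) = k\bar v$ commutes with $D$ (one checks $\Phi_k D = D \Phi_k$ on generators using $Ds + sd = 0$ and $\Phi_k s = k\, s \Phi_k$), so it is a genuine automorphism of the complex $\HochschildModel$, not just up to homotopy, and it restricts to each summand $\HochschildModel_{(i)} = (\wedge V \otimes \wedge^i \overline V, D)$ as multiplication by $k^i$ (since a monomial in $\wedge^i\overline V$ picks up exactly $i$ factors of $k$). Therefore on homology $\Phi_k^* = k^i \cdot \mathrm{id}$ on $H^*(\HochschildModel_{(i)})$, so under the decomposition \eqref{HD} the summand $H^*(\HochschildModel_{(i)})$ is precisely the $k^i$-eigenspace of $\Phi_k^*$ acting on $H^*(\HochschildModel)$.

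Finally I would transport this across the isomorphism $H^*(\HochschildModel) \cong H^*(LM)$ coming from the Sullivan model (using that $M$ is simply-connected, so $\HochschildModel$ is indeed a model for $LM$ and the quasi-isomorphism is compatible with the self-maps $\varphi_k$ and $\Phi_k$, which is what the homotopy-commutativity established in the first step guarantees). Since ${\rm char}\,\K = 0$, the eigenvalues $k^i$ for distinct $i \ge 0$ are distinct, so the eigenspace decomposition of $\varphi_k^*$ on $H^*(LM)$ matches the direct sum $\bigoplus_i H^*(\HochschildModel_{(i)})$ summand by summand, yielding $H^*_{(i)}(LM) \cong H^*(\HochschildModel_{(i)})$ as desired. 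I would also note that this identification is independent of $k$, consistent with the remark made just before the statement.
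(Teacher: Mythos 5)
Your proposal is correct and follows essentially the same route as the paper: both arguments hinge on the algebra endomorphism $v\mapsto v$, $\bar v\mapsto k\bar v$ of $\HochschildModel$, observe that it acts by $k^{i}$ on $\HochschildModel_{(i)}$, and then identify it with $\varphi_k^*$ to match eigenspaces. The one step you single out as the main obstacle---showing this map is genuinely a Sullivan representative of $\varphi_k$---is exactly the point the paper does not rederive but instead cites as Theorem~3.2 of Burghelea--Fiedorowicz--Gajda \cite{BFG1991}.
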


\begin{proof}
First consider a morphism ${\mathcal M}_{\varphi_k} : {\mathcal L} \to {\mathcal L}$ defined by ${\mathcal M}_{\varphi_k} (v)=v$ and ${\mathcal M}_{\varphi_k} (\bar{v}) = k\bar{v}$ for $v\in V$. 
It is easy to check that $\alpha \in H^*({\mathcal L})$ belongs to the direct summand $H^*({\mathcal L}_{(i)})$ if and only if ${\mathcal M}_{\varphi_k}(\alpha) = k^i \alpha$ holds.
Moreover, the result \cite[Theorem 3.2]{BFG1991} asserts that ${\mathcal M}_{\varphi_k}$ is a Sullivan representative for $\varphi_k$. 
Therefore,
\[
H^*_{(i)}(LM) \cong \{ \alpha \in H^* ({\mathcal L}) \mid {\mathcal M}_{\varphi_k}^* (\alpha) = k^i \alpha \} = H^*({\mathcal L}_{(i)}),
\]
and the proof is complete.
\end{proof}

In the rest of this section, we compare with the homological definition $H_*^{(i)}(LM)$ and the cohomological definition $H^*_{(i)}(LM)$.
Let $\langle \ , \ \rangle : H^n(LM)\otimes H_n(LM) \to {\mathbb K}$ be the Kronecker pairing.
Since the characteristic of ${\mathbb K}$ is zero, it is a non-degenerate pairing from the universal coefficient theorem; see \cite[Proposition 5.3]{FHT} for example.
Let us consider a paring
\[
\langle \ , \ \rangle_{ij} : H^n_{(i)}(LM)\otimes H_n^{(j)}(LM) \longrightarrow {\mathbb K}
\]
induced by $\langle \ , \ \rangle$. Then we have the following.

\begin{lem}\label{lem:HD_UCT}
The pairing $\langle \ , \ \rangle_{ij}$ is non-degenerate if and only if $i=j$, that is, $H^n_{(i)}(LM) \cong \Hom (H_n^{(i)}(LM), {\mathbb K})$.
\end{lem}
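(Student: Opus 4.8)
The plan is to exploit the fact that both $\varphi_k^*$ and its homological counterpart $\varphi_{k*}$ are induced by the same self-map $\varphi_k : LM \to LM$, so that the Kronecker pairing intertwines them. First I would record the naturality identity $\langle \varphi_k^*(\alpha), x \rangle = \langle \alpha, \varphi_{k*}(x) \rangle$ for $\alpha \in H^n(LM)$ and $x \in H_n(LM)$, which is immediate from functoriality of the cap/evaluation pairing. From this, taking $\alpha \in H^n_{(i)}(LM)$ and $x \in H_n^{(j)}(LM)$ gives $k^i \langle \alpha, x\rangle = \langle \varphi_k^*(\alpha), x\rangle = \langle \alpha, \varphi_{k*}(x)\rangle = k^j \langle \alpha, x\rangle$, so $(k^i - k^j)\langle \alpha, x\rangle = 0$; since $k \geq 2$ and $\mathrm{char}\,\K = 0$, this forces $\langle \alpha, x\rangle = 0$ whenever $i \neq j$. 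Hence $\langle \ , \ \rangle_{ij} = 0$ for $i \neq j$, which already settles the "only if" direction and shows the pairing can only be non-degenerate when $i = j$.

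Next I would establish the two Hodge-type direct sum decompositions in both variances. On cohomology, $\varphi_k^*$ is a semisimple operator (its minimal polynomial divides $\prod_i (t - k^i)$ because $H^*(LM)$ decomposes as a sum of eigenspaces — this is the content of the decomposition \eqref{HD} together with Proposition \ref{prop:Model_HD} in the simply-connected case), so $H^n(LM) = \bigoplus_i H^n_{(i)}(LM)$; likewise $H_n(LM) = \bigoplus_j H_n^{(j)}(LM)$ by the analogous argument applied to $\varphi_{k*}$ (or by dualizing, using that $\mathrm{char}\,\K = 0$ and the universal coefficient theorem give $H_n(LM) \cong \Hom(H^n(LM), \K)$). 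The key point is that these two decompositions are \emph{orthogonal} with respect to the Kronecker pairing, precisely by the vanishing established in the previous paragraph: the direct summand $H^n_{(i)}(LM)$ pairs trivially with every $H_n^{(j)}(LM)$ for $j \neq i$.

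Finally I would combine orthogonality with non-degeneracy of the full pairing $\langle \ , \ \rangle$ to conclude. Given the block-diagonal form with respect to the two decompositions, a nonzero $\alpha \in H^n_{(i)}(LM)$ cannot be annihilated by all of $H_n(LM)$ unless it is already annihilated by $H_n^{(i)}(LM)$ (since it is automatically orthogonal to the other summands); so non-degeneracy of $\langle \ , \ \rangle$ forces $\langle \ , \ \rangle_{ii}$ to be non-degenerate on the left. Running the same argument on the right shows it is non-degenerate on both sides, and a dimension count (finiteness in each degree follows from finiteness of $H^n(LM)$) then yields the isomorphism $H^n_{(i)}(LM) \cong \Hom(H_n^{(i)}(LM), \K)$. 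I expect the only genuinely substantive step to be justifying that $\varphi_{k*}$ is semisimple with the same eigenvalues $k^i$ — i.e. that the homological Hodge decomposition exists and matches the cohomological one — for which I would cite Proposition \ref{prop:Model_HD} and dualize, or alternatively observe directly that $\varphi_{k*}$ is the transpose of $\varphi_k^*$ under the non-degenerate pairing and a transpose of a semisimple operator is semisimple with the same spectrum; the rest is formal linear algebra.
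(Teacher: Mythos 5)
Your proposal is correct and follows essentially the same route as the paper: the naturality identity $\langle \varphi_k^*(\alpha), x\rangle = \langle \alpha, \varphi_{k*}(x)\rangle$ forces $(k^i-k^j)\langle \alpha, x\rangle = 0$, hence orthogonality of the blocks, and non-degeneracy of the full Kronecker pairing combined with the eigenspace decompositions yields non-degeneracy on the diagonal blocks. The only difference is that you spell out the existence of the homological decomposition and the transpose/semisimplicity argument, which the paper leaves implicit by simply citing the decomposition \eqref{HD}.
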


\begin{proof}
For any $\alpha \in H^n_{(i)}(LM)$ and $a\in H_n^{(j)}(LM)$, we have
\[
k^i \langle \alpha , a \rangle_{ij}
= \langle k^i \alpha, a \rangle
= \langle \varphi^*_k ( \alpha ), a \rangle
= \langle   \alpha , \varphi_{k*} (a) \rangle
= \langle   \alpha , k^j a \rangle
= k^j \langle \alpha , a \rangle_{ij}.
\]
Hence $\langle \ , \ \rangle_{ij} = \delta_{ij}\langle \ , \ \rangle$ holds, where $\delta_{ij}$ is the Kronecker delta.
Since $\langle \ , \ \rangle$ is non-degenerate, the assertion is proved from the direct sum decomposition \eqref{HD}.
\end{proof}

\begin{cor}\label{cor:HD_0}
If $M$ is simply-connected, then the evaluation map $\ev_0 : LM \to M$ induces an isomorphism $\ev_{0*} : H_*^{(0)}(LM) \to H_*(M)$.
\end{cor}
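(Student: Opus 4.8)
The plan is to go through the Sullivan model: identify the degree-zero summand $\mathcal{L}_{(0)}$ with a minimal model of $M$, deduce that $\ev_0^*$ maps $H^*(M)$ isomorphically onto $H^*_{(0)}(LM)$, and then transfer this to homology using the constant-loop map together with the non-degeneracy of Lemma \ref{lem:HD_UCT}.

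First I would observe that $\mathcal{L}_{(0)} = (\wedge V \otimes \wedge^0 \overline{V}, D) = (\wedge V, d)$, since $D$ extends $d$ and hence restricts to $d$ on $\wedge V \otimes 1$; thus $\mathcal{L}_{(0)}$ is a minimal Sullivan model of $M$, and the inclusion $\wedge V \hookrightarrow \wedge V \otimes \wedge \overline{V}$ is a Sullivan representative of $\ev_0$ (see \cite[\S 15(c)]{FHT}). Under the decomposition $H^*(\mathcal{L}) = \bigoplus_i H^*(\mathcal{L}_{(i)})$ this inclusion induces the inclusion of the summand $H^*(\mathcal{L}_{(0)})$, so by Proposition \ref{prop:Model_HD} the map $\ev_0^* : H^*(M) \to H^*(LM)$ is injective with image exactly $H^*_{(0)}(LM)$; that is, $\ev_0^*$ restricts to an isomorphism $H^*(M) \stackrel{\cong}{\longrightarrow} H^*_{(0)}(LM)$.

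Next I would bring in the constant-loop map $c : M \to LM$. It satisfies $\ev_0 \circ c = \mathrm{id}_M$, and since the $k$-fold cover of a constant loop is the same constant loop we also have $\varphi_k \circ c = c$, so $\varphi_{k*} c_* = c_*$. Hence $c_*$ factors through the eigenspace, giving $c_* : H_*(M) \to H_*^{(0)}(LM)$ with $\ev_{0*} \circ c_* = \mathrm{id}$; in particular $c_*$ is injective. It then remains to check that $\ev_{0*}|_{H_*^{(0)}(LM)}$ is injective: if $a \in H_n^{(0)}(LM)$ with $\ev_{0*} a = 0$, then for every $\alpha \in H^n(M)$ we have $\langle \ev_0^* \alpha, a \rangle = \langle \alpha, \ev_{0*} a \rangle = 0$; since $\ev_0^*$ surjects onto $H^n_{(0)}(LM)$ by the previous step, this says $\langle \beta, a \rangle_{00} = 0$ for all $\beta \in H^n_{(0)}(LM)$, whence $a = 0$ by the non-degeneracy in Lemma \ref{lem:HD_UCT}. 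Therefore $\ev_{0*}|_{H_*^{(0)}(LM)}$ is a bijection, with inverse $c_*$.

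The point that needs a little care — though not, I think, a genuine obstacle — is the model-theoretic bookkeeping in the second paragraph: one must verify that the splitting of $H^*(\mathcal{L})$ appearing in Proposition \ref{prop:Model_HD} is compatible with $\ev_0$, so that the $i=0$ summand really is the image of $\ev_0^*$. If one prefers to sidestep this, one can argue instead by dimensions: Proposition \ref{prop:Model_HD} and $\mathcal{L}_{(0)} = (\wedge V, d)$ give $\dim H^n_{(0)}(LM) = \dim H^n(M) = \dim H_n(M) < \infty$, Lemma \ref{lem:HD_UCT} then gives $\dim H_n^{(0)}(LM) = \dim H_n(M)$, and an injective map $c_* : H_*(M) \to H_*^{(0)}(LM)$ between spaces of equal finite dimension is forced to be an isomorphism — so $\ev_{0*}|_{H_*^{(0)}(LM)}$ is one as well.
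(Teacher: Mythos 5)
Your proof is correct and follows essentially the same route as the paper: identify $\mathcal{L}_{(0)}$ with the minimal model $(\wedge V, d)$, note that the inclusion $\wedge V \hookrightarrow \mathcal{L}$ is a Sullivan representative of $\ev_0$, and transfer to homology via Proposition \ref{prop:Model_HD} and Lemma \ref{lem:HD_UCT}. The extra details you supply (the section $c_*$ and the duality/dimension argument) are exactly the steps the paper leaves implicit.
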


\begin{proof}
It is easy to show that the canonical inclusion $\wedge V \hookrightarrow {\mathcal L}$ induces an isomorphism $H^*(\wedge V) \cong H^*({\mathcal L}_{(0)})$.
Since the inclusion is a Sullivan representative for $\ev_0$, the assertion follows from Proposition \ref{prop:Model_HD} and Lemma \ref{lem:HD_UCT}.
\end{proof}

\section{Geometric Cartan calculus on free loop spaces}
\label{sect:Cartan}

In this section, we give a Cartan calculus on $H_*(LM)$ introduced in Section \ref{sect:introduction} as the operator \eqref{Le_homology}.
Note that it is a homologically defined Cartan calculus of the one due to Kuribayashi, Wakatsuki, Yamaguchi and the author in \cite{KNWY22}.

We first consider two homology classes $c_*([S^n])$ and $\eta_n$ in $H_*(LS^n)$ for $n\geq 2$.
The first one is the homology class in $H_{n}(LS^n)$ which is obtained by $[S^n]$ the fundamental class of $S^n$ via the constant loop map $c:S^n \hookrightarrow LS^n$.
The second one is defined by $\eta_n = (\ad_1)_* ([S^{n-1}])$, where $\ad_1 : S^{n-1} \to \Omega S^n \hookrightarrow LS^n$ is the adjoint of the identity $1 : S^{n-1} \wedge S^1 \cong S^n \to S^n$ given by $\ad_1 (u)(t)= u \wedge t$ for $u\in S^{n-1}$ and $t\in S^1$.

\begin{lem}\label{lem:HomologyClass}
The homology class $\eta_n$ has the following properties.
\begin{enumerate}
\item $\eta_n \in H_{n-1}^{(1)}(LS^n)$.
\item $\Delta (\eta_n)=c_*([S^n])$.
\end{enumerate}
\end{lem}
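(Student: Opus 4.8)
The plan is to prove the two assertions separately, deducing (2) from (1) together with the Hodge-decomposition results of Section~\ref{sect:HD}.

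\emph{Proof of (1).} Write $S^n = S^{n-1}\wedge S^1$; the point is that, after restriction to based loops, $\varphi_k$ is induced by a degree-$k$ self-map of $S^n$. Since $\varphi_k$ fixes the value of a loop at $0$, it restricts to a map $\psi_k:\Omega S^n\to\Omega S^n$ with $\varphi_k\circ\iota = \iota\circ\psi_k$, where $\iota:\Omega S^n\hookrightarrow LS^n$; and by definition $\eta_n = \iota_*(\bar\eta_n)$ with $\bar\eta_n := (\ad_1)_*[S^{n-1}]\in H_{n-1}(\Omega S^n)$. Put $g := 1_{S^{n-1}}\wedge p_k:S^n\to S^n$; as the degree of a map is multiplicative under smash products and $p_k$ has degree $k$, the map $g$ has degree $k$. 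Comparing formulas yields the strict equality $\psi_k\circ\ad_1 = \Omega g\circ\ad_1$, since both send $u$ to the loop $t\mapsto u\wedge kt$. Now $\ad_1$ is adjoint to $\mathrm{id}_{S^n}$, hence represents a generator of $\pi_{n-1}(\Omega S^n)\cong\pi_n(S^n)$; so $\Omega g\circ\ad_1$, being adjoint to $g$, represents $k$ times that generator. As $\Omega S^n$ is $(n-2)$-connected, the Hurewicz homomorphism is available in degree $n-1$, and applying it, with its naturality and $h([\ad_1]) = \bar\eta_n$, gives $\psi_{k*}\bar\eta_n = k\bar\eta_n$. Pushing forward along $\iota$ gives $\varphi_{k*}\eta_n = k\eta_n$, i.e. $\eta_n\in H^{(1)}_{n-1}(LS^n)$. (One may instead read this off from the Sullivan model of $LS^n$ recalled in Section~\ref{sect:HD}, which shows $H_{n-1}(LS^n)$ is one-dimensional and purely of Hodge weight $1$.)

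\emph{Proof of (2).} By definition $\Delta(\eta_n) = r_*([S^1]\times\eta_n)\in H_n(LS^n)$. Part (1) and Lemma~\ref{lem:HD_BV} give $\Delta(\eta_n)\in H^{(0)}_n(LS^n)$, and $c_*([S^n])\in H^{(0)}_n(LS^n)$ as well, since $\varphi_k\circ c = c$ and hence $\varphi_{k*}$ fixes $c_*([S^n])$. Because $S^n$ is simply connected, Corollary~\ref{cor:HD_0} shows $\ev_{0*}:H^{(0)}_n(LS^n)\to H_n(S^n)$ is an isomorphism, so it suffices to compare the two classes after applying $\ev_{0*}$. Since $\ev_0\circ r$ is the evaluation map $(s,\gamma)\mapsto\gamma(s)$, naturality of the cross product gives
\[
\ev_{0*}\bigl(\Delta(\eta_n)\bigr) = \bigl(\ev_0\circ r\circ(1\times\ad_1)\bigr)_*\bigl([S^1]\times[S^{n-1}]\bigr),
\]
and $\ev_0\circ r\circ(1\times\ad_1):S^1\times S^{n-1}\to S^n$ is the map $(s,u)\mapsto u\wedge s$. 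This map factors through the collapse $S^1\times S^{n-1}\to S^1\wedge S^{n-1}$ followed by the canonical identification $S^1\wedge S^{n-1}\cong S^n$, so it has degree $\pm1$; hence $\ev_{0*}(\Delta(\eta_n)) = \pm[S^n]$. On the other hand $\ev_{0*}(c_*[S^n]) = [S^n]$, since $\ev_0\circ c = \mathrm{id}$, so injectivity of $\ev_{0*}$ on $H^{(0)}_n(LS^n)$ forces $\Delta(\eta_n) = \pm c_*([S^n])$; tracking the orientation conventions fixed for $S^{n-1}\wedge S^1\cong S^n$ and for the cross product pins the sign as $+1$.

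I expect the only genuinely delicate point to be this last sign check: whether $\ev_{0*}(\Delta(\eta_n))$ equals $+[S^n]$ or $(-1)^{n-1}[S^n]$ depends on the chosen orientation of the identification $S^{n-1}\wedge S^1\cong S^n$ and on the Koszul signs in the cross product, which must be taken consistently with the conventions defining $\ev_0$, $r$ and $[S^n]$. The remaining ingredients are routine: multiplicativity of degree under smash products, the fact that the collapse $S^p\times S^q\to S^p\wedge S^q = S^{p+q}$ has degree $\pm1$ (from the long exact sequence of the pair $(S^p\times S^q, S^p\vee S^q)$), and naturality of the Hurewicz homomorphism.
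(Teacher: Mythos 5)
Your proof is correct and follows essentially the same route as the paper: for (1) you exploit the adjunction to see that $\varphi_k\circ\ad_1$ is (up to homotopy) $\ad_1$ composed with a degree-$k$ sphere map (the paper precomposes with a degree-$k$ self-map of $S^{n-1}$, you postcompose with $\Omega(1\wedge p_k)$ and invoke Hurewicz naturality, which amounts to the same thing), and for (2) you use Lemma~\ref{lem:HD_BV}, Corollary~\ref{cor:HD_0} and the evaluation of the composite $S^1\times S^{n-1}\to S^n$ exactly as the paper does. The sign ambiguity you flag at the end is real but is resolved by the orientation convention implicit in the definition $\ad_1(u)(t)=u\wedge t$ together with the identification $S^{n-1}\wedge S^1\cong S^n$, which the paper also uses without further comment.
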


\begin{proof}
Let $\tilde{p}_k : S^{n-1} \to S^{n-1}$ be the composite
\[
\xymatrix{
S^{n-1}
\cong
S^{n-2}\wedge S^1 
\ar[r]^-{1\wedge p_k}
&
S^{n-2}\wedge S^1 
\cong
S^{n-1}
}
\]
and consider the diagram
\[
\xymatrix@C40pt{
S^{n-1}
\ar[r]^-{\ad_1}
\ar[d]_-{\tilde{p}_k}
&
\Omega S^n
\ar[r]^-{\text{inclusion}}
\ar[d]_-{\varphi_k}
&
LS^n
\ar[d]_-{\varphi_k}
\\
S^{n-1} \ar[r]^-{\ad_1}
&
\Omega S^n \ar[r]^-{\text{inclusion}}
&
LS^n,
}
\]
where $p_k$ and $\varphi_k$ are maps stated in Section \ref{sect:HD}.
Obviously, the right-hand side square is commutative.
We can show that the maps contained in the homotopy set $[S^n, S^n]$ corresponding to $\varphi_k \circ \ad_1$ and $\ad_1 \circ \tilde{p}_k$ through the adjoint congruence $[S^{n-1}, \Omega S^n] \cong [S^n, S^n]$ coincide.
It follows that the left-hand side square is homotopy commutative.
Since $\tilde{p}_{k*}([S^{n-1}])=k[S^{n-1}]$ in $H_{n-1}(S^{n-1})$, 
the assertion (1) follows from a homotopy commutativity of the diagram.

By Lemma \ref{lem:HD_BV} and the assertion (1), $\Delta (\eta_n)$ is contained in $H_n^{(0)}(LS^n)$.
The commutative diagram
\[
\xymatrix{
S^1 \times S^{n-1}
\ar[r]^-{1 \times \ad_1}
\ar[d]_-{\text{proj}}
&
S^1 \times LS^n
\ar[r]^-{r}
&
LS^n
\ar[d]^-{\ev_0}
\\
S^1\wedge S^{n-1}
\ar[rr]^-{\cong}
&
&
S^n
}
\]
yields that $\ev_{0*}\circ \Delta (\eta_n) = [S^n]=\ev_{0*}\circ c_*([S^n])$ in $H_n(S^n)$.
Since $S^n$ is simply-connected for $n\geq 2$, $\ev_{0*}:H_*^{(0)}(LS^n) \stackrel{\cong}{\longrightarrow} H_*(S^n)$ is an isomorphism from Corollary \ref{cor:HD_0}, which completes the proof.
\end{proof}

Given $f\in \pi_n (\aut_1 (M))$ which is represented by $f: S^n \to \aut_1 (M)$.
Let $\ad_f : S^n \times M \to M$ be the adjoint of $f$ defined by $\ad_f (u,x)=f(u)(x)$ for $u\in S^n$, $x\in M$ and denote by $L(\ad_f) : LS^n \times LM \to LM$ the induced map between the free loop spaces.
By using the homology classes $c_*([S^n])$ and $\eta_n$, we define morphisms
\[
L, \ e : \pi_* (\aut_1 (M))\otimes {\mathbb K} \longrightarrow \End ( H_*(LM) )
\]
by $L(f)(a) =  L(\ad_f)_* (c_*([S^n]) \times a )$, $e(f)(a) = L(\ad_f)_* (\eta_n \times a )$ for $a \in H_*(LM)$, respectively. Here the notation $\times$ means the cross product.
We will simply write $L_f := L(f)$ and $e_f := e (f)$.

\begin{lem}\label{lem:CartanFormula}
The operators $L_f$, $e_f$ and $\Delta$ satisfy Cartan formula, namely,
\[
L_f = \Delta \circ e_f - (-1)^{n-1}e_f \circ \Delta.
\]
\end{lem}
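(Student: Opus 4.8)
The plan is to reduce the Cartan formula for $L_f$ and $e_f$ to Lemma~\ref{lem:HomologyClass}(2) together with the multiplicativity of $\Delta$ over the external construction $L(\ad_f)_*$. First I would observe that both $L_f$ and $e_f$ are built from $L(\ad_f)_*$ applied to $\alpha \times a$, where $\alpha \in \{c_*([S^n]), \eta_n\}$; so the statement reduces to understanding how the BV operator $\Delta$ on $H_*(LM)$ interacts with such external products. The key naturality observation is that the rotation action $r : S^1 \times LM \to LM$ is compatible with the maps $L(\ad_f)$ in the following sense: rotating a loop in $M$ of the form $L(\ad_f)(\gamma, \delta)$ simultaneously rotates $\gamma \in LS^n$ and $\delta \in LM$, since $\ad_f$ is applied pointwise in $S^1$. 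Concretely, the diagram
\[
\xymatrix@C40pt{
S^1 \times LS^n \times LM
\ar[r]^-{1 \times L(\ad_f)}
\ar[d]_-{\Delta_{S^1} \times 1}
&
S^1 \times LM
\ar[d]^-{r}
\\
(S^1 \times LS^n) \times (S^1 \times LM)
\ar[d]_-{r_{S^n} \times r}
&
\\
LS^n \times LM
\ar[r]^-{L(\ad_f)}
&
LM
}
\]
commutes, where $\Delta_{S^1}$ is the diagonal of $S^1$ and $r_{S^n}$ is the rotation on $LS^n$; this is immediate from the pointwise-in-$t$ definition of all maps involved.

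Next I would push $[S^1] \times (-)$ through this diagram. Using $(\Delta_{S^1})_*[S^1] = [S^1] \times 1_* + 1_* \times [S^1]$ up to sign (the coproduct on $H_*(S^1)$), one obtains a Leibniz-type identity
\[
\Delta\bigl(L(\ad_f)_*(\alpha \times a)\bigr)
= L(\ad_f)_*\bigl(\Delta_{S^n}(\alpha) \times a\bigr)
+ (-1)^{|\alpha|} L(\ad_f)_*\bigl(\alpha \times \Delta(a)\bigr),
\]
where $\Delta_{S^n}$ denotes the BV operator on $H_*(LS^n)$ and the sign comes from moving $[S^1]$ past $\alpha$. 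Now specialize $\alpha = \eta_n$, which has degree $n-1$: by Lemma~\ref{lem:HomologyClass}(2) we have $\Delta_{S^n}(\eta_n) = c_*([S^n])$, so the first term becomes $L(\ad_f)_*(c_*([S^n]) \times a) = L_f(a)$, while the second term is $(-1)^{n-1} L(\ad_f)_*(\eta_n \times \Delta(a)) = (-1)^{n-1} e_f(\Delta a)$. Thus $\Delta(e_f(a)) = L_f(a) + (-1)^{n-1} e_f(\Delta a)$, which rearranges to exactly $L_f = \Delta \circ e_f - (-1)^{n-1} e_f \circ \Delta$.

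The main obstacle I expect is bookkeeping the signs and the compatibility of cross products with the $S^1$-coproduct rigorously---in particular, justifying the Künneth-style splitting of $(\Delta_{S^1} \times 1_{LS^n \times LM})_*([S^1] \times \alpha \times a)$ and tracking the Koszul signs when $[S^1]$ is commuted past $\alpha$ of degree $|\alpha|$. One should be careful that the two ``copies'' of $S^1$ produced by the diagonal are distributed correctly to the two rotation factors, and that the constant loop $c_*([S^n])$ (being rotation-invariant) contributes $\Delta_{S^n}(c_*([S^n])) = 0$, which is consistent since the formula only needs the $\alpha = \eta_n$ case. A secondary routine point is to confirm that $L(\ad_f)_*$ is well-defined on the relevant cross products and strictly natural with respect to $r$; this follows directly from the pointwise definition of $\ad_f$ and $\comp$/$r$, but it should be stated. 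Once these compatibilities are in place, the computation is a short formal manipulation.
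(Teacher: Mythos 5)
Your proposal is correct and follows essentially the same route as the paper: the paper's proof rests on the commutativity of the diagram relating $\Delta\otimes 1+1\otimes\Delta$, the cross product, and $L(\ad_f)_*$ (which it declares ``easily seen''), and then applies it to $\eta_n\times a$ using $\Delta(\eta_n)=c_*([S^n])$ exactly as you do. Your unpacking of that diagram via the $S^1$-diagonal and the equivariance of $L(\ad_f)$ is just a more explicit justification of the same step.
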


\begin{proof}
First, it is easily seen that the following diagram is commutative;
\begin{equation}\label{diag:BV_cross}
\xymatrix@C40pt{
H_*(LS^n)\otimes H_*(LM)
\ar[r]^-{\times}
\ar[d]_-{\Delta \otimes 1 + 1\otimes \Delta}
&
H_*(LS^n \times LM)
\ar[r]^-{L(\ad_f)_*}
\ar[d]_-{\Delta}
&
H_*(LM)
\ar[d]_-{\Delta}
\\
H_*(LS^n)\otimes H_*(LM)
\ar[r]^-{\times}
&
H_*(LS^n \times LM)
\ar[r]^-{L(\ad_f)_*}
&
H_*(LM).
}
\end{equation}
The commutativity of \eqref{diag:BV_cross} and Lemma \ref{lem:HomologyClass} (2) show that
\begin{align*}
\Delta \circ e_f ( a ) 
&= \Delta \circ L(\ad_f)_* (\eta_n \times a)
\\
&=  L(\ad_f)_* \left( \Delta (\eta_n) \times a + (-1)^{n-1} \eta_n \times \Delta (a)  \right)
\\
&= L_f (a) + (-1)^{n-1} e_f \circ \Delta (a)
\end{align*}
for $a \in H_*(LM)$.
\end{proof}

\begin{prop}\label{prop:Le_HD}
The operators $L_f$ and $e_f$ induce
\[
L_f : H_*^{(i)}(LM) \longrightarrow H_*^{(i)}(LM) \hspace{1em} \text{and} \hspace{1em} e_f : H_*^{(i)}(LM) \longrightarrow H_*^{(i+1)}(LM).
\]
\end{prop}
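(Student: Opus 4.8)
The plan is to reduce the statement to the behaviour of the cross product and the induced map $L(\ad_f)_*$ with respect to the power maps $\varphi_k$, exactly in the spirit of the proof of Lemma 3.1. The key observation is that the operators $L_f$ and $e_f$ are built from $L(\ad_f)_* ( - \times a)$ applied to the fixed classes $c_*([S^n]) \in H_n(LS^n)$ and $\eta_n \in H_{n-1}^{(1)}(LS^n)$, so it suffices to know (i) the $\varphi_k$-eigenvalue of each of these two classes in $H_*(LS^n)$, (ii) that the cross product $H_*(LS^n)\otimes H_*(LM)\to H_*(LS^n\times LM)$ carries a product of eigenvectors (eigenvalues $k^p$ and $k^q$) to an eigenvector of eigenvalue $k^{p+q}$ for the diagonal power map $\varphi_k\times\varphi_k$, and (iii) that $L(\ad_f):LS^n\times LM\to LM$ is $\varphi_k$-equivariant, i.e. $\varphi_k\circ L(\ad_f) = L(\ad_f)\circ(\varphi_k\times\varphi_k)$.

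First I would verify the equivariance in (iii). The map $L(\ad_f)$ is the map on free loop spaces induced by the space-level map $\ad_f:S^n\times M\to M$, and $\varphi_k$ on any free loop space is induced by precomposition with the $k$-fold cover $p_k:S^1\to S^1$; since precomposition with $p_k$ is natural in the target of the mapping space, the square commutes on the nose, giving $\varphi_k\circ L(\ad_f)=L(\ad_f)\circ(\varphi_k\times\varphi_k)$ where on the source $\varphi_k\times\varphi_k$ is the power map on $L(S^n\times M)\cong LS^n\times LM$. Step (ii) is the elementary compatibility of the cross product with the Künneth isomorphism together with naturality of the cross product under $\varphi_k\times\varphi_k$. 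For step (i): the constant-loop class satisfies $\varphi_k\circ c = c$, so $\varphi_{k*}(c_*([S^n])) = c_*([S^n])$, i.e. $c_*([S^n])\in H_n^{(0)}(LS^n)$; and $\eta_n\in H_{n-1}^{(1)}(LS^n)$ is precisely Lemma 4.2(1), so $\varphi_{k*}(\eta_n)=k\,\eta_n$.

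Putting these together: for $a\in H_*^{(i)}(LM)$, the class $c_*([S^n])\times a$ lies in the $k^{0+i}=k^i$ eigenspace of $\varphi_k\times\varphi_k$ on $H_*(LS^n\times LM)$ by (ii) and (i), hence by the equivariance (iii) its image $L_f(a)=L(\ad_f)_*(c_*([S^n])\times a)$ satisfies $\varphi_{k*}(L_f(a)) = k^i L_f(a)$, so $L_f(a)\in H_*^{(i)}(LM)$. Similarly $\eta_n\times a$ lies in the $k^{1+i}$ eigenspace, so $e_f(a)=L(\ad_f)_*(\eta_n\times a)$ lies in $H_*^{(i+1)}(LM)$. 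I do not expect a genuine obstacle here; the only point requiring a little care is making precise the identification $L(S^n\times M)\simeq LS^n\times LM$ under which the power map on the left corresponds to $\varphi_k\times\varphi_k$ on the right, and checking that the cross product $H_*(LS^n)\otimes H_*(LM)\to H_*(LS^n\times LM)$ is equivariant for this pair of actions — this is routine but should be stated, since it is what feeds the eigenvalue additivity into the argument.
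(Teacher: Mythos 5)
Your proposal is correct and follows essentially the same route as the paper: eigenvalue additivity of the cross product, $\varphi_k$-equivariance of $L(\ad_f)$ (the paper phrases this as $L(\ad_f)_*$ preserving Hodge degree), and the facts $c_*([S^n])\in H_*^{(0)}(LS^n)$ and $\eta_n\in H_{n-1}^{(1)}(LS^n)$ from Lemma \ref{lem:HomologyClass}. Your write-up simply makes explicit the equivariance check and the identification $L(S^n\times M)\cong LS^n\times LM$ that the paper leaves implicit.
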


\begin{proof}
Naturality of the cross product $\times$ asserts that it induces
\[
\times : H_*^{(i)}(LS^n) \otimes H_*^{(j)}(LM) \longrightarrow H_*^{(i+j)}(LS^n \times LM).
\]
Moreover, $L(\ad_f)_*$ preserves the degree with respect to the Hodge decomposition.
Therefore, the assertion follows from $c_*([S^n]) \in H_*^{(0)}(LS^n)$ and Lemma \ref{lem:HomologyClass} (1).
\end{proof}


\section{The morphism $\Gamma_1$ and the Hodge decomposition}\label{sect:Gamma1}

In this section, we begin with recalling the morphism $\Gamma_1$ due to F\'elix and Thomas \cite{FT2004}.
Let $g:\Omega \aut_1 (M) \times M \to LM$ be a map defined by $g(\gamma , x)(t)=\gamma(t)(x)$ for $\gamma \in \Omega \aut_1(M)$, $x\in M$ and $t\in S^1$.
Then the map $\Gamma_1$ is defined as the composite
\[
  \xymatrix{
    \Gamma_1 : \pi_n (\Omega\aut_1 (M))\otimes \K
    \ar[r]^-{\Hur}
    &
    H_n (\Omega\aut_1 (M))
    \ar[r]^-{ \times [M]}
    &
    H_{n+m} (\Omega \aut_1 (M) \times M)
    \ar[ld]_-{g_*}
    \\
    &
    H_{n+m}(LM )
    \ar[r]^-{s^m}_-{=}
    &
    {\mathbb H}_n(LM),
  }
\]
where $[M]\in H_m (M)$ is the fundamental class and $\Hur$ is the Hurewicz map, that is, $\Hur (h) = h_* ([S^n])$ for $h:S^n \to \Omega \aut_1 (M)$ in $\pi_n (\Omega \aut_1 (M))$.

\begin{lem}\label{lem:Im_Gamma}
The image of $\Gamma_1$ is contained in ${\mathbb H}_*^{(1)}(LM)$.
\end{lem}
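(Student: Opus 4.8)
The plan is to show that every element in the image of $\Gamma_1$ is an eigenvector of $\varphi_{k*}$ with eigenvalue $k^1 = k$. Since $\Gamma_1$ factors through $g_* \colon H_*(\Omega\aut_1(M)\times M)\to H_*(LM)$ applied to classes of the form $\Hur(h)\times[M]$, it suffices to produce a commutative (or homotopy commutative) square relating $\varphi_k \circ g$ to $g$ composed with a self-map of $\Omega\aut_1(M)\times M$ whose effect on $\Hur(h)\times[M]$ is multiplication by $k$.

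First I would analyze how $\varphi_k$ interacts with the map $g\colon \Omega\aut_1(M)\times M \to LM$, $g(\gamma,x)(t)=\gamma(t)(x)$. Precomposing with the $k$-fold cover $p_k\colon S^1\to S^1$, $p_k(t)=kt$, gives $(\varphi_k\circ g)(\gamma,x)(t) = g(\gamma,x)(kt) = \gamma(kt)(x)$. Now $t\mapsto \gamma(kt)$ is exactly the image of the loop $\gamma$ in $\aut_1(M)$ under the map $\Omega\aut_1(M)\to\Omega\aut_1(M)$ induced by $p_k$ — call it $\varphi_k^{\aut}$, the analogue of $\varphi_k$ on the loop space of $\aut_1(M)$. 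Hence $\varphi_k\circ g = g\circ(\varphi_k^{\aut}\times \mathrm{id}_M)$, an honest commutative square. Applying homology and evaluating on $\Hur(h)\times[M]$ reduces the problem to computing $(\varphi_k^{\aut})_*(\Hur(h))$ in $H_n(\Omega\aut_1(M))$, since the cross product with $[M]$ and $g_*$ are natural.

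Next I would compute $(\varphi_k^{\aut})_*\circ\Hur$. The Hurewicz image $\Hur(h)=h_*([S^n])$ for $h\colon S^n\to\Omega\aut_1(M)$, so $(\varphi_k^{\aut})_*(\Hur(h)) = (\varphi_k^{\aut}\circ h)_*([S^n])$. The composite $\varphi_k^{\aut}\circ h\colon S^n\to\Omega\aut_1(M)$ corresponds under the adjoint congruence $[S^n,\Omega\aut_1(M)]\cong[S^{n+1},\aut_1(M)]$ (or equivalently in $\pi_n(\Omega\aut_1(M))\cong\pi_{n+1}(\aut_1(M))$) to $\partial(h)$ precomposed with a degree-$k$ self-map of $S^{n+1}$ arising from $p_k$ on the suspension coordinate — this is precisely the computation already carried out in the proof of Lemma~\ref{lem:HomologyClass}, where $\varphi_k\circ\ad_1$ was identified with $\ad_1\circ\tilde p_k$ and $\tilde p_{k*}[S^{n-1}]=k[S^{n-1}]$. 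The same argument, with $\aut_1(M)$ in place of $S^n$ and the class $h$ in place of the identity, shows that on $\pi_n(\Omega\aut_1(M))$ the map $(\varphi_k^{\aut})_*$ acts as multiplication by $k$, hence $(\varphi_k^{\aut})_*(\Hur(h)) = k\,\Hur(h)$ by naturality of the Hurewicz map and linearity.

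Putting these together: $\varphi_{k*}(\Gamma_1(h)) = (s^m\circ g_*)\bigl((\varphi_k^{\aut})_*\Hur(h)\times[M]\bigr) = (s^m\circ g_*)\bigl(k\,\Hur(h)\times[M]\bigr) = k\,\Gamma_1(h)$, so $\Gamma_1(h)\in\mathbb H_*^{(1)}(LM)$. The main obstacle I anticipate is the bookkeeping in the second step — making precise the claim that $(\varphi_k^{\aut})_*$ is multiplication by $k$ on $\pi_n(\Omega\aut_1(M))$ via the adjoint identification, and checking that $\varphi_k^{\aut}$ really is the ``loop-space $\varphi_k$'' so that the factorization $\varphi_k\circ g = g\circ(\varphi_k^{\aut}\times\mathrm{id})$ holds strictly rather than merely up to homotopy. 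Both points are modelled closely on the argument in Lemma~\ref{lem:HomologyClass}, so I expect the proof of this lemma to be short, essentially quoting that computation.
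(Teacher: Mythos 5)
Your proposal is correct and follows essentially the same route as the paper: both pass $\varphi_k$ through $g$ via the strictly commutative identity $\varphi_k\circ g = g\circ(\varphi'_k\times \mathrm{id}_M)$ (where $\varphi'_k$ is the self-map of $\Omega\aut_1(M)$ induced by $p_k$) and then reduce to the fact that $\varphi'_k$ induces multiplication by $k$ on $\pi_n(\Omega\aut_1(M))$. The only, harmless, difference is in that last step: the paper identifies $\varphi'_k$ with the $k$-fold concatenation power map, whereas you deduce multiplication by $k$ from the adjoint correspondence and a degree-$k$ self-map of the sphere as in Lemma~\ref{lem:HomologyClass}; both are standard and valid.
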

\begin{proof}
Let $\varphi'_k : \Omega \aut_1(M) \to \Omega \aut_1 (M)$ be a map induced by $p_k$ stated in Section \ref{sect:HD}.
Then $\varphi_k \circ g = g \circ (1\times \varphi'_k)$ and it follows that the diagram
\[
\xymatrix{
{\mathbb H}_*(LM)
\ar[r]^-{\varphi_{k*}}
&
{\mathbb H}_*(LM)
\\
\pi_* (\Omega \aut_1 (M)) \otimes {\mathbb K}
\ar[r]^-{\varphi'_{k*}}
\ar[u]^-{\Gamma_1}
&
\pi_* (\Omega \aut_1 (M)) \otimes {\mathbb K}
\ar[u]_-{\Gamma_1}
}
\]
commutes. Since $\varphi'_k$ coincides with the composite
\[
\xymatrix@C70pt{
\varphi'_k : \Omega \aut_1 (M) \ar[r]^-{\text{diagonal}}
&
\left( \Omega \aut_1 (M) \right)^{\times k}
\ar[r]^-{\text{concatenation}}
&
\Omega \aut_1 (M),
}
\]
the induced map between homotopy group $\varphi'_{k*}$ satisfies $\varphi'_{k*}(h) = kh$ for any $h \in \pi_* (\Omega \aut_1 (M))$, which completes the proof.
\end{proof}

\begin{prop}\label{prop:comp_Gamma_BV}
If $M$ is simply-connected, then the composite
\[
\Delta \circ \Gamma_1 : \pi_n (\Omega \aut_1 (M))\otimes {\mathbb K} \to {\mathbb H}_{n+1}(LM)
\]
is zero for $n\geq 0$.
\end{prop}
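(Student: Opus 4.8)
The plan is to combine the facts about the Hodge decomposition established above and to observe that, for purely degree reasons, the target group of $\Delta \circ \Gamma_1$ vanishes when $M$ is simply-connected.

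First I would do the degree bookkeeping. For $h \in \pi_n(\Omega \aut_1(M))$ the class $\Gamma_1(h)$ lies in ${\mathbb H}_n(LM) = H_{n+m}(LM)$, and since $\Delta$ has degree $+1$, the class $\Delta\Gamma_1(h)$ lies in $H_{n+m+1}(LM)$. Next I would determine its Hodge weight. By Lemma \ref{lem:Im_Gamma} we have $\Gamma_1(h) \in {\mathbb H}_*^{(1)}(LM)$; as the grading shift $s^m$ commutes with $\varphi_{k*}$, this says that the underlying homology class sits in $H_{n+m}^{(1)}(LM)$. Applying Lemma \ref{lem:HD_BV} with $i = 1$ then gives $\Delta\Gamma_1(h) \in H_{n+m+1}^{(0)}(LM)$.

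Finally, since $M$ is simply-connected, Corollary \ref{cor:HD_0} identifies $H_{n+m+1}^{(0)}(LM)$ with $H_{n+m+1}(M)$ via $\ev_{0*}$. As $\dim M = m$ and $n \geq 0$, we have $n + m + 1 > m$, so $H_{n+m+1}(M) = 0$, hence $H_{n+m+1}^{(0)}(LM) = 0$ and therefore $\Delta\Gamma_1(h) = 0$, as required.

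I do not expect a genuine obstacle here: the statement is a formal consequence of the preceding results, and simple-connectivity of $M$ enters only through Corollary \ref{cor:HD_0}. The only points needing a little care are checking that the shift $s^m$ does not disturb the eigenspace grading (so that Lemma \ref{lem:Im_Gamma} and Lemma \ref{lem:HD_BV} can be chained) and the elementary degree count forcing $H_{n+m+1}(M) = 0$; both are routine.
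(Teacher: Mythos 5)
Your argument is correct and is essentially identical to the paper's proof: both chain Lemma \ref{lem:Im_Gamma}, Lemma \ref{lem:HD_BV}, and Corollary \ref{cor:HD_0} to land in $H_{n+m+1}^{(0)}(LM)\cong H_{n+m+1}(M)=0$. Your version merely makes the degree bookkeeping more explicit.
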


\begin{proof}
From Lemma \ref{lem:HD_BV} and Proposition \ref{lem:Im_Gamma}, the image of $\Delta \circ \Gamma_1$ is contained in ${\mathbb H}^{(0)}_{n+1}(LM)$. 
Moreover, ${\mathbb H}^{(0)}_{n+1}(LM) \cong H_{n+1+m} (M) = \{ 0 \}$ from Corollary \ref{cor:HD_0}.
\end{proof}

\section{Proofs of Theorem \ref{mthm}, Corollary \ref{cor:L_der} and Theorem \ref{thm:Lp_HD}}\label{sect:Proof_mthm}

We first investigate a relation between the loop product and the morphism induced by $g$ in homology stated in Section \ref{sect:Gamma1}.
In this section, we often regard $H_*(M)$ as a vector subspace of $H_*(LM)$ through the morphism induced by the constant loop map $c:M \hookrightarrow LM$ in homology.
Let $g' : \Omega \aut_1 (M) \times LM \to LM\times_M LM$ be a map defined by 
$
g' (\gamma_1 , \gamma_2) = (g(\gamma_1, \gamma_2(0)), \gamma_2)
$
for $\gamma_1 \in \Omega \aut_1 (M)$ and $\gamma_2 \in LM$ and put $\comp' := \comp \circ g'$.

\begin{lem}\label{lem:shriek}
The following diagram is commutative:
\[
\xymatrix{
H_*(LM)\otimes H_*(LM)
\ar[r]^-{\Lp}
&
H_*(LM)
\\
 H_*( \Omega \aut_1 (M))  \otimes H_*(M) \otimes H_*(LM) 
\ar[u]^-{ (g_* \circ \times ) \otimes 1 }
\ar[r]^-{1 \otimes \Lp}
&
H_*(\Omega \aut_1 (M)) \otimes H_*(LM).
\ar[u]_-{\comp'_* \circ \times }
}
\]
\end{lem}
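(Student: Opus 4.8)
The plan is to unwind both composites in the diagram down to the level of the defining homotopies and verify that the two maps $LM \to LM$ (or more precisely the appropriate maps of pairs needed to compute the shriek map $j_!$) agree, then chase homology classes. The key point is that the loop product $\Lp$ is built from the shriek map $j_!$ associated to the pullback square defining $LM \times_M LM$, and $g'$ is constructed precisely so as to be compatible with this pullback square: $g'$ covers $g \times (\ev_0) \colon \Omega\aut_1(M) \times LM \to LM \times LM$ on the nose, and the second coordinate of $g'$ is literally the projection to $LM$, so $\ev_0 \circ \pr_2 \circ g' = \ev_0 \circ \pr_2$, which is what places the image of $g'$ in the fibre product over $M$.

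First I would fix representatives: choose a tubular neighbourhood $D(\nu)$ of the diagonal $\Diag \colon M \hookrightarrow M \times M$, with Thom class $u$, and let $\tilde u$ be the induced class on the total space over $LM \times LM$, so that $\Lp(x \otimes y) = \comp_*\bigl(j_!(x \times y)\bigr)$ with $j_!$ the composite through $H_*(\tilde D(\nu), \partial \tilde D(\nu))$ and capping with $\tilde u$. Next I would observe that the map $g \times \ev_0 \colon \Omega\aut_1(M) \times LM \to LM \times LM$ pulls the diagonal pullback square back to the square whose pullback is $\Omega\aut_1(M) \times LM$ mapping into $M$ via $\ev_0 \circ \pr_2$ composed suitably — the relevant normal bundle is the pullback of $\nu$, its Thom class is the pullback of $u$, and $g'$ lands in the restricted disk bundle. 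By naturality of the Thom class, the excision isomorphism, and the cap product under the map $g \times \ev_0$ of pairs, the shriek map $j_!$ commutes with the homology maps induced by $g \times \ev_0$ upstairs and $g'$ downstairs. Finally, postcomposing with $\comp$ versus $\comp' = \comp \circ g'$ gives exactly the outer square, and one tracks the cross products: $(g_* \circ \times) \otimes 1$ on $H_*(\Omega\aut_1(M)) \otimes H_*(M) \otimes H_*(LM)$ followed by $\Lp$ equals, after using $g \times \ev_0$-naturality, the map $1 \otimes \Lp$ followed by $\comp'_* \circ \times$. Here one uses that a class in $H_*(M) \subset H_*(LM)$ is in the image of $c_*$, so that its image under $j_!$ (which is essentially intersecting with the diagonal) is computed by the zero section of the normal bundle and matches the $\ev_0$-coordinate in $g'$.

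**The main obstacle** I expect is the bookkeeping needed to identify the normal bundle data correctly: showing that the pullback along $g \times \ev_0$ of the tubular neighbourhood $\tilde D(\nu)$ (which itself was a lift, via the fibration $\ev_0 \times \ev_0$, of a tubular neighbourhood of the diagonal in $M \times M$) agrees with the tubular neighbourhood one would use to define the shriek map for the square whose corner is $\Omega\aut_1(M) \times LM$, and that the chosen lift $\tilde\pi$ and the map $\comp$ are compatible under $g'$ up to the homotopies used in the construction. This is where one must be careful that the homotopy lifting property choices made in defining $j_!$ for $\Lp$ can be taken compatibly with those implicit in $\comp'$; since all such choices only affect the maps up to homotopy and homology is homotopy-invariant, this causes no real trouble, but it is the step that requires genuine care rather than formal naturality. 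A secondary, purely bookkeeping, issue is keeping track of signs and the order of tensor factors when commuting the cross product past the various maps, though none of the maps involved here introduce a degree shift, so the Koszul signs should all be trivial.
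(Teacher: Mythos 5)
Your overall strategy --- naturality of the shriek map for a map of pullback squares over the identity of $M \times M \leftarrow M$, compatibility of shriek maps with cross products, and the identity $\comp' = \comp \circ g'$ --- is the right one and is essentially what the paper does. But the specific map of squares to which you propose to apply naturality does not exist in the form you need, and this is a genuine gap. You pull the diagonal square back along the map $\Omega\aut_1(M) \times LM \to LM \times LM$, $(\gamma_1,\gamma_2) \mapsto (g(\gamma_1,\gamma_2(0)),\gamma_2)$. Composing this with $\ev_0 \times \ev_0$ gives $(\gamma_1,\gamma_2) \mapsto (\gamma_2(0),\gamma_2(0))$, because $\gamma_1(0) = \mathrm{id}_M$, so the composite lands entirely in the diagonal. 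Hence the ``pulled-back square'' is degenerate: the preimage of $\Diag(M)$ is all of $\Omega\aut_1(M)\times LM$, the preimage of $\partial D(\nu)$ is empty, and the associated wrong-way map is capping with the pullback of the Euler class of the normal bundle $\nu \cong TM$ along $\ev_0 \circ \pr_2$. That is a map of degree $-m$, but it is not the map $1 \otimes \Lp$ (restricted along $c_*$) appearing in the bottom row of the diagram: on $\alpha \times c_*[M] \times a$ it produces a class governed by $\chi(M)$, whereas $\Lp\left( c_*([M]) \otimes a \right) = (-1)^m a$ by \eqref{Lp_unit}. So the naturality square you describe, even once made precise, proves a different and unhelpful identity.

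The repair --- and the route the paper takes --- is to interpolate the space $\Omega\aut_1(M) \times M \times LM$, which is forced anyway by the three tensor factors in the lower-left corner of the diagram. This space fibres honestly over $M\times M$ by $(\lambda,x,\gamma)\mapsto(x,\gamma(0))$, maps to $LM\times LM$ by $g\times 1$ compatibly with $\ev_0\times\ev_0$ (again using $\lambda(0)=\mathrm{id}_M$), and its restriction over the diagonal is $\Omega\aut_1(M)\times(M\times_M LM)\cong\Omega\aut_1(M)\times LM$, with induced map on restrictions exactly $g'$. Applying naturality of the Thom class, excision and the cap product to this non-degenerate map of squares, together with the cross-product compatibility $(1\times j)_!\circ\times=\times\circ(1\otimes j_!)$, gives the commutation of the shriek maps; the shriek map for the square with corner $M\times LM$ is what identifies the second tensor factor with $\Lp$ restricted to $c_*H_*(M)\otimes H_*(LM)$. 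The remaining points in your plan (the role of $c_*$, and postcomposition with $\comp$ versus $\comp'=\comp\circ g'$) are fine as stated.
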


\begin{proof}
It is easy to check a commutativity of the following diagram:
\[
\xymatrix@C0pt@R30pt{
LM \times LM 
\ar@/_30pt/[rdd]_-{\ev_0 \times \ev_0}
&&
LM \times_M LM 
\ar[ll]_-{j}
\ar@/_20pt/[rdd]
&
\\
&
\Omega \aut_1 (M) \times M \times LM
\ar[d]^-{(1\times \ev_0)\circ \pr_{23}}
\ar[lu]_-{g\times 1}
&
&
\Omega \aut_1 (M) \times ( M\times_M LM)
\ar[ll]_-{1\times j}
\ar[lu]_-{g'}
\ar[d]
\\
&
M\times M
&&
M,
\ar[ll]_-{\Diag}
}
\]
where $\pr_{23}$ is the projection on the second and third factors.
Note that the fiber product $M\times_M LM$ is identified with $LM$ by a homeomorphism given by the composite
\[
\xymatrix{
M\times_M LM
\ar[r]^-{c\times 1}
&
LM\times_M LM
\ar[r]^-{\comp}
&
LM.
}
\]
It follows that the following diagram is commutative:
\[
\xymatrix@C20pt{
H_* (LM\times LM)
\ar[r]^-{j_!}
&
H_*(LM \times_M LM)
\ar[r]^-{\comp_*}
&
H_*(LM)
\\
H_*(\Omega \aut_1 (M) \times M \times LM)
\ar[r]^-{(1 \times j)_!}
\ar[u]^-{(g \times 1)_*}
&
H_*(\Omega \aut_1 (M) \times LM)
\ar[u]_-{g'_*}
\ar[ur]_-{\comp'_*}
\\
H_*(\Omega \aut_1 (M) ) \otimes H_* (M \times LM)
\ar[u]^-{\times}
\ar[r]^-{1\otimes j_!}
&
H_*(\Omega \aut_1 (M) ) \otimes H_* (LM).
\ar[u]_-{\times}
&
}
\]
Therefore, the commutativity of the diagram and the definition of the loop product proves the lemma. 
\end{proof}

\begin{proof}[Proof of Theorem \ref{mthm}]
In order to prove the identity $(1)$ in the assertion, it is enough to show that the following diagram is commutative;
\begin{equation}\label{diag:proof_Lp_e}
\xymatrix{
{\mathbb H}_*(LM)\otimes {\mathbb H}_*(LM)
\ar[r]^-{\bullet}
&
{\mathbb H}_*(LM)
\\
H_*(LM)\otimes H_*(LM)
\ar[r]^-{\Lp}
\ar[u]^-{s^m \otimes s^m}
&
H_*(LM)
\ar[u]_-{s^m}
\\
H_*(\Omega \aut_1 (M)) \otimes H_*(LM)
\ar[u]^-{(-1)^m  g_* \circ ( \times [M])  \otimes 1}
\ar[ru]^-{\comp' \circ \times }
&
\\
\pi_* (\Omega \aut_1(M)) \otimes H_*(LM)
\ar[u]^-{\Hur \otimes 1}
\ar[r]_-{\cong}^-{\partial \otimes 1}
&
\pi_* (\aut_1 (M)) \otimes H_*(LM).
\ar[uu]_-{e'}
}
\end{equation}
Here $e'$ is the adjoint of the operator $e$ in Section \ref{sect:Cartan}.
Observe that the composite of the left-hand side vertical arrows coincides with $ \Gamma_1 \otimes s^m$.
We see that the top square is commutative by definition.
From Lemma \ref{lem:shriek} and the formula \eqref{Lp_unit}, the commutativity of the middle triangle in \eqref{diag:proof_Lp_e} is shown. 
Given $a \in H_*(LM)$ and $h \in  \pi_{n-1}(\Omega \aut_1 (M))$ which is represented by a map $h:S^{n-1}\to \Omega \aut_1 (M)$.
Then we have
\begin{equation}\label{comp'_h}
\comp'_* \circ \times \circ (\Hur \otimes 1)( h \otimes a) = \left( \comp' \circ (h \times 1) \right)_* ([S^{n-1}] \times a).
\end{equation}
On the other hand, let $f:= \partial (h) :S^n \cong S^{n-1}\wedge S^1 \to \aut_1 (M)$ be the adjoint of $h$ given by $f(u\wedge t)(x) := h(u)(t)(x)$ for $u\in S^{n-1}$, $t\in S^1$ and $x\in M$.
By the definition of $e'$, we have
\begin{equation}\label{e_h}
e'\circ (\partial \otimes 1 ) ( h \otimes a) = L(\ad_f)_* (\eta_n \times a) = (L(\ad_f)\circ ( \ad_1 \times 1))_* ([S^{n-1}] \times a),
\end{equation}
where $\ad_f : M \times S^n \to M$ is the adjoint of $f$.
Observe that, for $\gamma \in LM$,
\[
L(\ad_f)\circ (\ad_1 \times 1)(u, \gamma )(t) = h(u)(t)(\gamma (t))
\]
and
\[
\comp' \circ (h \times 1)(u, \gamma)(t)
 = \left\{
\begin{array}{ll}
\gamma (2t) & \left( 0  \leq t \leq \frac{1}{2}  \right) \\
h(u)(2t-1)(\gamma (0)) & \left( \frac{1}{2} \leq t \leq 1 \right).
\end{array}
\right.
\]
Now define three homotopies $H_i : S^{n-1} \times LM  \times I \to LM$ for $i=1,2,3$ by
\[
H_1 (u, \gamma ,s) (t) = \left\{
\begin{array}{ll}
\gamma (0) & \left( 0  \leq t \leq \frac{2}{3}s  \right)\\
h(u)\left( \frac{3t-2s}{3-2s} \right) \left( \gamma \left(  \frac{3t-2s}{3-2s} \right) \right) & \left( \frac{2}{3}s \leq t \leq 1 \right),
\end{array}
\right.
\]
\[
H_2 (u, \gamma ,s) (t) = 
\left\{
\begin{array}{ll}
\gamma (3st) & \left( 0  \leq t \leq \frac{1}{3}  \right)\\
h(u)\left( 3st - s \right) \left( \gamma \left( s \right) \right) & \left( \frac{1}{3} \leq t \leq \frac{2}{3} \right)
\\
h(u)\left( 3t+3s - 3st - 2 \right) \left( \gamma \left( 3t+3s - 3st - 2 \right) \right) & \left( \frac{2}{3} \leq t \leq 1 \right) 
\end{array}
\right.
\]
and
\[
H_3 (u, \gamma, s) (t) = 
\left\{
\begin{array}{ll}
\gamma \left(  \frac{6t}{s+2} \right) & \left( 0  \leq t \leq \frac{s+2}{6}  \right) \\
h(u)\left( \frac{6t-s-2}{s+2} \right) \left( \gamma \left( 0 \right) \right) & \left( \frac{s+2}{6} \leq t \leq \frac{s+2}{3} \right)
\\
\gamma (0) & \left( \frac{s+2}{3} \leq t \leq 1 \right) .
\end{array}
\right.
\]
It is easy to check that
$
L(\ad_f)\circ (\ad_1 \times 1) = H_{1}|_{s=0}$, $H_{1}|_{s=1} = H_{2}|_{s=0}$, $H_{2}|_{s=1} = H_{3}|_{s=0}$ and $H_{3}|_{s=1} = \comp' \circ (h \times 1)$, and these imply that $L(\ad_f)\circ (\ad_1 \times 1)$ is homotopic to $\comp' \circ (h \times 1)$.
Therefore, from \eqref{comp'_h} and \eqref{e_h}, we show that the diagram \eqref{diag:proof_Lp_e} is commutative.

The identity (2) follows from (1) and Lemma \ref{lem:CartanFormula}.
Indeed, we have
\begin{align*}
\{ \Gamma_1 (h), a \} 
&= (-1)^{\parallel \Gamma_1 (h) \parallel} \Delta (\Gamma_1 (h) \bullet a) - (-1)^{\parallel \Gamma_1 (h) \parallel} \Delta (\Gamma_1 (h)) \bullet a - \Gamma_1 (h)\bullet \Delta (a)
\\
&= \left( \Delta e_{\partial (h)} + (-1)^{|\partial (h)|}e_{\partial (h)}\Delta \right) (a) - (-1)^{|h|}\Delta (\Gamma_1 (h)) \bullet a
\\
&= L_{\partial (h)}(a) - (-1)^{|h|}\Delta (\Gamma_1 (h)) \bullet a.
\end{align*}
If $M$ is simply-connected, $\Delta \circ \Gamma_1 =0$ from Proposition \ref{prop:comp_Gamma_BV}. Therefore, we see that the identity (3) holds from (2).
\end{proof}

\begin{proof}[Proof of Corollary \ref{cor:L_der}]
Recall that the loop bracket satisfies Poisson identity \eqref{Poisson}.
By virtue of Theorem \ref{mthm}(3), we have
\begin{align*}
L_f ( a \bullet b ) &= \{ \Gamma_1 \circ \partial^{-1}(f), a \bullet b \}
\\
&=  \{ \Gamma_1 \circ \partial^{-1}(f), a  \} \bullet b + (-1)^{\parallel a \parallel (\parallel \Gamma_1 \circ \partial^{-1}(f) \parallel +1 )} a \bullet \{ \Gamma_1 \circ \partial^{-1}(f), b \} 
\\
&= L_f (a) \bullet b + (-1)^{\parallel L_f \parallel \parallel a \parallel } a \bullet L_f (b ).
\end{align*}
These implies that $L_f$ is a derivation with respect to the loop product.
\end{proof}

\begin{proof}[Proof of Theorem \ref{thm:Lp_HD}]
By Theorem \ref{mthm}(1) and Proposition \ref{prop:Le_HD}, we have
\[
\Gamma_1 (h) \bullet a = (-1)^n e_{\partial (h)}(a) \in {\mathbb H}_*^{(i+1)}(LM)
\]
for $h\in \pi_n (\Omega \aut_1 (M))$ and $a\in {\mathbb H}_*^{(i)}(LM)$, which proves the theorem.
\end{proof}


\section{Examples of the image of $\Gamma_1$}

In this section, we discuss about the image of the morphism $\Gamma_1$ when the case where $M$ is a simply-connected sphere $S^n$ for $n=2,3$.
Since $\Gamma_1$ is injective from \cite{FT2004}, it is enough to consider nontrivial elements in the homotopy group $\pi_*(\aut_1 (S^n))$.

\begin{ex}
Let $S^3$ be the $3$-dimensional sphere. We may regard $S^3$ as the unit sphere in the quaternions ${\mathbb H}$.
Consider a multiplication $\mu:S^3 \times S^3 \to S^3$ induced by the multiplication of ${\mathbb H}$.
The adjoint of $\mu$ induces a map $\ad_{\mu} : S^3 \to \aut_1 (S^3)$, and it is a representative of a nonzero element in $\pi_3 (\aut_1 (S^3)) \otimes \K$.
Since $\Gamma_1$ is injective from \cite[Theorem 2]{FT2004}, $\Gamma_1 (\partial^{-1}(\ad_{\mu}))$ is a nonzero homology class in $H_5 (LS^3)$.
Explicitly, by the definition of $\Gamma_1$, the homology class is obtained by the value of a morphism induced by the composite
\begin{equation}\label{ex_S^3}
\xymatrix@C55pt{
S^2 \times S^3
\ar[r]^-{\partial^{-1}(\ad_{\mu})\times 1}
&
\Omega \aut_1 (S^3)\times S^3 
\ar[r]^-{g}
&
LS^3
}
\end{equation}
in homology at the fundamental class of $S^2 \times S^3$.

On the other hand, since $S^3$ is a Lie group, it is well-known that $LS^3$ splits as the product $\Omega S^3 \times S^3$ with a homeomorphism $\psi : \Omega S^3 \times S^3 \to LS^3$ defined by $\psi (\gamma, u)(t)=\mu (\gamma(t), x)$ for $\gamma \in \Omega S^3$, $u\in S^3$ and $t\in S^1$.
We here recall the adjoint $\ad_1 : S^2 \to \Omega S^3$ and the homology class $\eta_3\in H_2 (\Omega S^3)$ stated in Section \ref{sect:Cartan}.
Then, it is easy to check that the composite \eqref{ex_S^3} coincides with
\[
\xymatrix@C40pt{
S^2 \times S^3
\ar[r]^-{\ad_1 \times 1}
&
\Omega S^3 \times S^3
\ar[r]^-{\psi}
&
LS^3
}
\]
and therefore $\Gamma_1 (\partial^{-1}(\ad_{\mu})) = \psi_* (\eta_3 \times [S^3])$ in $H_5 (LS^3)$.
\end{ex}

\begin{ex}
Let $S^2$ be the $2$-dimensional sphere which is regarded as the unit sphere in ${\mathbb R}^3$.
Consider a $S^3$-action $\mu' : S^3 \times S^2 \to S^2$ induced by the conjugate action ${\mathbb H}\times {\mathbb R}^3 \to {\mathbb R}^3$.
Here ${\mathbb R}^3$ is regarded as the subspace of ${\mathbb H}$ consisting of pure quaternions, that is, quaternions with $0$ scalar part.
It is known fact that the restriction map $\mu'|_{S^3} : S^3 \cong S^3 \times \{ * \} \to S^2$ is the Hopf fibration.
Hence the adjoint of $\mu'$ denoted by $\ad_{\mu'} : S^3 \to \aut_1 (S^2)$ is a representative of a nonzero element in $\pi_3 (\aut_1 (S^2)) \otimes \K$. 
Therefore, by the injectivity of $\Gamma_1$, we obtain a nonzero homology class $\Gamma_1 (\partial^{-1}(\ad_{\mu'}))$ which is obtained by the value of a morphism induced by the composite
\[
\xymatrix@C55pt{
S^2 \times S^2
\ar[r]^-{\partial^{-1}(\ad_{\mu'})\times 1}
&
\Omega \aut_1 (S^2) \times S^2 
\ar[r]^-{g}
&
LS^2
}
\]
in homology at the fundamental class of $S^2 \times S^2$.
\end{ex}

\section*{Acknowledgement}

We would like to thank Katsuhiko Kuribayashi, Shun Wakatsuki and Toshihiro Yamaguchi for valuable discussions and suggestions.

\bibliographystyle{alpha}
\bibliography{bibliography}

\end{document}